\newtheorem{thm}{Theorem}[section]
\newtheorem{lem}[thm]{Lemma}
\newtheorem{cor}[thm]{Corollary}
\newtheorem*{claim}{Claim}
\newtheorem*{corA}{Corollary~\ref{cor:geometricFlat}}
\newtheorem*{thmA}{Theorem~\ref{thm:main}}
\theoremstyle{definition}
\newtheorem{defn}[thm]{Definition}
\newtheorem{exmp}[thm]{Example}
\newtheorem{prob}[thm]{Problem}
\DeclareMathOperator{\stabilizer}{Stabilizer}
\newcommand{\dist}{\textup{\textsf{d}}}
\newcommand{\field}[1]{\mathbb{#1}}
\newcommand{\integers}{\ensuremath{\field{Z}}}
\newcommand{\naturals}{\ensuremath{\field{N}}}
\newcommand{\reals}{\ensuremath{\field{R}}}
\newcommand{\ola}[1]{\overleftarrow{ #1 }}
\newcommand{\ora}[1]{\overrightarrow{ #1 }}
\DeclareMathOperator{\stab}{Stab}
\newcommand\restr[2]{{% we make the whole thing an ordinary symbol
  \left.\kern-\nulldelimiterspace % automatically resize the bar with \right
  #1 % the function
  \vphantom{\big|} % pretend it's a little taller at normal size
  \right|_{#2} % this is the delimiter
  }}
\begin{document}
\title{A Generalized Axis Theorem for Cube Complexes}
\author{Daniel J. Woodhouse}
\email{daniel.woodhouse@mail.mcgill.ca}

\begin{abstract}
 We consider a finitely generated virtually abelian group $G$ acting properly and without inversions on a CAT(0) cube complex $X$.
 We prove that $G$ stabilizes a finite dimensional CAT(0) subcomplex $Y \subseteq X$ that is isometrically embedded in the combinatorial metric.
 Moreover, we show that $Y$ is a product of finitely many quasilines.
 The result represents a higher dimensional generalization of Haglund's axis theorem.
\end{abstract}

\maketitle

\section{Introduction}

A \emph{CAT(0) cube complex} $X$ is a cell complex that satisfies two properties: it is a geodesic metric space satisfying the CAT(0) comparison triangle condition, and each $n$-cell is isometric to $[0,1]^n$.
We will call this metric the \emph{CAT(0) metric} $\dist_X$ and refer to~\cite{BridsonHaefliger} for a comprehensive account.
A \emph{hyperplane} $\Lambda \subseteq X$ is the subset of points equidistant between two adjacent vertices.
Despite the brevity of this definition, hyperplanes are better understood via their combinatorial definition, and the reader is urged to consult the literature; see~\cite{Sageev97}~\cite{HaglundSemiSimple}~\cite{WiseCBMS2012} for the required background.
There also exists an alternative metric on the $0$-cubes of $X$, that we will refer to as the \emph{combinatorial metric} $\dist^c_X$, sometimes referred to as the \emph{$\ell^1$-metric}.
The combinatorial distance between two $0$-cubes is the length of the shortest combinatorial path in $X$ joining the $0$-cubes.
Equivalently, the combinatorial distance between two $0$-cubes is the number of hyperplanes in $X$ separating them.
%Throughout this paper we will assume that we are considering $X$ with the combinatorial metric, unless otherwise stated.
We will always assume that a group $G$ acting on a CAT(0) cube complex preserves its cell structure and maps cubes isometrically to cubes.
A group $G$ acts without \emph{inversions} if the stabilizer of a hyperplane also stabilizes each  complementary component.
The requirement that the action be without inversions is not a serious restriction as $G$ acts without inversions on the cubical subdivision.

A connected CAT(0) cube complex $X$ is a \emph{quasiline} if it is quasiisometric to $\reals$. % and each hyperplane is compact and separates $X$ into two infinite diameter subspaces.
The \emph{rank} of a virtually abelian group commensurable to $\mathbb{Z}^n$ is $n$.
The goal of this paper will be the following theorem:

\begin{thmA}
Let $G$ be virtually $\mathbb{Z}^n$.
Suppose $G$ acts properly and without inversions on a CAT(0) cube complex $X$.
Then $G$ stabilizes a finite dimensional subcomplex $Y \subseteq X$ that is isometrically embedded in the combinatorial metric, and $Y \cong \prod_{i=1}^m C_i$, where each $C_i$ is a cubical quasiline and $m \geq n$.
Moreover, $\stab_G(\Lambda)$ is a codimension-1 subgroup for each hyperplane $\Lambda$ in $Y$.
\end{thmA}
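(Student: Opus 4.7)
The plan is to first reduce to the case $G = \mathbb{Z}^n$ by passing to a torsion-free finite-index subgroup: once a $\mathbb{Z}^n$-invariant subcomplex $Y_0$ with the desired product structure is obtained, the combinatorial convex hull of its finite $G$-orbit yields a $G$-invariant subcomplex of the same type, possibly at the cost of enlarging $m$.

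With $G = \mathbb{Z}^n$ acting properly, torsion-freeness forces every nontrivial $g \in G$ to act as a combinatorial loxodromic, and Haglund's axis theorem produces a combinatorial axis (a cubical quasiline) for each such $g$. Call a hyperplane $\Lambda \subseteq X$ \emph{essential} if it is skewered by some nontrivial element of $G$. A central step is to prove that $\stab_G(\Lambda)$ is a codimension-one subgroup of $G$ for every essential $\Lambda$: the $G$-action on the set of $G$-translates of $\Lambda$ factors through a translation action on a linearly ordered facing family, yielding a surjection $G \twoheadrightarrow \mathbb{Z}$ whose kernel is exactly $\stab_G(\Lambda)$.

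I would then partition the essential hyperplanes into parallelism classes $\mathcal{F}_1, \ldots, \mathcal{F}_m$, grouping together those that share a common facing family (equivalently, that determine the same codimension-one subgroup). Each class $\mathcal{F}_i$ corresponds to a rational direction in $G \otimes \reals \cong \reals^n$; since each nontrivial $g$ skewers some essential class and the collected loxodromic directions must span $\reals^n$ (otherwise some element would act elliptically, contradicting properness), one obtains $m \geq n$. Hyperplanes from distinct classes must be transverse: if $\Lambda_i \in \mathcal{F}_i$ and $\Lambda_j \in \mathcal{F}_j$ with $i \ne j$ were disjoint, their joint $G$-orbit would assemble into a single facing family, forcing $\mathcal{F}_i = \mathcal{F}_j$. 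Taking $Y$ to be the combinatorial convex hull of an orbit $G \cdot x$ for a suitably chosen basepoint $x$, the pairwise transversality of the classes produces a product decomposition $Y \cong \prod_{i=1}^m C_i$ (in the spirit of Caprace--Sageev), where each $C_i$ is the cubical quasiline associated to $\mathcal{F}_i$. Isometric embedding in the combinatorial metric follows because the convex hull is built out of hyperplanes that separate pairs of points in $Y$.

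The main obstacle is assembling the product decomposition itself. Haglund's axis theorem gives clean control over a single loxodromic element, but here one must simultaneously coordinate axes of a full rank-$n$ collection of commuting loxodromic isometries and arrange their skewered facing families so that the orbit hull sits inside a finite-dimensional subcomplex cleanly split by the transverse parallelism classes. Verifying finiteness of the number of classes, establishing the pairwise transversality, and then deducing the factorization into quasiline factors — all while remaining in the combinatorial metric — is the core combinatorial difficulty of the theorem.
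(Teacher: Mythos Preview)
Your outline has two substantive gaps, and both touch on the heart of the argument.

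First, taking $Y$ to be the combinatorial convex hull of an orbit $G\cdot x$ does not work. The paper notes explicitly, right after the statement, that $Y$ will \emph{not} in general be a convex subcomplex. The convex hull of an orbit can be infinite dimensional and need not decompose as a product of quasilines; this is precisely why the paper builds $Y$ as the dual cube complex $C(Q,\mathcal{W}_Q)$ of the wallspace obtained by restricting hyperplanes to the image $Q$ of a Cayley graph, and then appeals to Lemma~\ref{lem:hemi0} to get an isometric (but not convex) embedding into $X$. The same objection applies to your reduction step from virtually $\mathbb{Z}^n$ to $\mathbb{Z}^n$: the convex hull of the finite $G$-orbit of a product of quasilines has no reason to remain a product of quasilines.

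Second, your argument that $\stab_G(\Lambda)$ is codimension one is not correct as stated. You assert that the $G$-orbit of an essential hyperplane is a linearly ordered facing family, producing a surjection $G\twoheadrightarrow\mathbb{Z}$ with kernel $\stab_G(\Lambda)$. But a single skewering element only orders its own cyclic sub-orbit; other generators of $G$ can send $\Lambda$ to hyperplanes transverse to it, so the full $G$-orbit need not be nested and $\stab_G(\Lambda)$ could a priori have any rank. The paper obtains the codimension-one conclusion by an entirely different mechanism: Gerasimov's fixed-point theorem (Theorem~\ref{thm:fixedCube}) is used in Lemma~\ref{lem:minSubcomplex} to pass to a minimal $G$-invariant convex subcomplex $X_o$ with finitely many hyperplane orbits, each of whose stabilizers is forced to be codimension one. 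Only after this reduction does the paper run an argument in the spirit of yours, partitioning the walls of $Q$ by commensurability class of stabilizer and using a $G$-equivariant quasi-isometry $Q\to\mathbb{R}^n$ to see that walls in distinct classes correspond to non-parallel affine hyperplanes and hence must cross in $Q$. The product decomposition and the quasiline structure of each factor then fall out of the dual cube complex construction, not from a convex hull.
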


\noindent
 Note that $Y$ will not in general be a convex subcomplex.

 \begin{cor}
 Let $A$ be a finitely generated virtually abelian group acting properly on a CAT(0) cube complex $X$.
 Then $A$ acts metrically properly on $X$.
 \end{cor}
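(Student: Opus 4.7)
The plan is to apply Theorem A directly to the virtually abelian group $A$. Writing $A$ as virtually $\mathbb{Z}^n$ (the case $n=0$ being immediate), Theorem A produces an $A$-stable subcomplex $Y \cong \prod_{i=1}^m C_i$ with each $C_i$ a cubical quasiline, isometrically embedded in $X$ in the combinatorial metric. Since $Y$ is finite dimensional, $\dist_Y$ and $\dist^c_Y$ are bi-Lipschitz equivalent, and the isometric embedding gives $\dist^c_X(x_0, ax_0) = \dist^c_Y(x_0, ax_0)$ for any $x_0 \in Y$; hence it suffices to prove that $A$ acts metrically properly on $(Y, \dist^c_Y)$.

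I would then pass to a finite-index subgroup $A' \leq A$ that is free abelian, preserves each factor $C_i$, and preserves both ends of each $C_i$; each of these is a finite-index condition. Let $\rho_i \colon A' \to \Aut(C_i)$ be the induced action and $\tau_i \colon A' \to \mathbb{Z}$ the signed combinatorial translation length along $C_i$. Semi-simplicity of combinatorial cube-complex isometries (every automorphism of $C_i$ is elliptic or hyperbolic) together with the commutativity and end-preservation of $A'$ makes each $\tau_i$ a homomorphism. Setting $\Phi = (\tau_1, \ldots, \tau_m) \colon A' \to \mathbb{Z}^m$, the product structure of the combinatorial metric on $Y$ and the universal inequality ``displacement $\geq$ translation length'' give
\[
\dist^c_Y(y, ay) \ = \ \sum_{i=1}^m \dist^c_{C_i}(y_i, \rho_i(a)y_i) \ \geq \ \sum_{i=1}^m |\tau_i(a)| \ = \ \|\Phi(a)\|_1
\]
for every $y \in Y$ and $a \in A'$. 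Once $\ker\Phi$ is known to be finite, $\Phi(A')$ is a finitely generated subgroup of the discrete group $\mathbb{Z}^m$, so $\|\Phi(a)\|_1 \to \infty$ as $a$ leaves finite subsets of $A'$, which is metric properness on $Y$.

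The main obstacle is therefore the finiteness of $\ker\Phi$. An element $a \in \ker\Phi$ has translation length $0$ on every $C_i$ and so, by semi-simplicity, acts elliptically on each $C_i$; thus $\rho_i(\ker\Phi)$ consists of pairwise commuting elliptic isometries of the CAT(0) space $C_i$. I would invoke the standard fact that such a family has a common fixed point --- proved by induction, each new generator preserving the convex fixed-point set of its predecessors and restricting to an elliptic isometry of it, since bounded orbits in the ambient space remain bounded in any invariant convex subspace. This produces a common fixed point $p_i \in C_i$ for each $i$, whence $p = (p_1, \ldots, p_m) \in Y$ is fixed by $\ker\Phi$. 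The minimal cube of $Y$ containing $p$ has finitely many vertices and is $\ker\Phi$-invariant, so a finite-index subgroup of $\ker\Phi$ fixes a $0$-cube of $Y$; by properness of the $A$-action this stabilizer, and hence $\ker\Phi$ itself, is finite, completing the argument.
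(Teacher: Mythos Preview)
Your argument is correct, but it is considerably more elaborate than what the paper has in mind. The paper states this corollary without proof because it is meant to be immediate from Theorem~\ref{thm:main}: the subcomplex $Y \cong \prod_{i=1}^m C_i$ is \emph{locally finite}, since each $C_i$ is a quasiline on which (as shown in the proof of Theorem~\ref{thm:main}) some $Z_i \cong \mathbb{Z}$ acts with finitely many orbits of maximal cubes, and a finite product of locally finite complexes is locally finite. A proper cellular action on a locally finite cube complex is automatically metrically proper in the combinatorial metric: any combinatorial ball contains only finitely many $0$-cubes, each with finite stabilizer. Combined with your observation that $\dist^c_X = \dist^c_Y$ on $Y$, this finishes the argument in one line; the paper in fact invokes exactly this reasoning (``the action of $H$ on $Y$ is proper and $Y$ is locally finite'') in the proof of the later corollary on ascending HNN extensions.

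Your route via the translation-length homomorphism $\Phi = (\tau_1,\ldots,\tau_m)\colon A'\to\mathbb{Z}^m$ is a legitimate alternative and has the virtue of producing an explicit map witnessing properness. The one step that deserves a sentence more of justification is the assertion that the signed combinatorial translation length $\tau_i$ is a homomorphism on $A'$: this is true, but it uses that on a quasiline the signed \emph{stable} translation length (defined via $\lim_n \dist^c(x,g^n x)/n$ together with the end towards which $g^n x$ escapes) is additive for commuting end-preserving isometries, and that by Haglund's semisimplicity the stable and minimal combinatorial translation lengths coincide. Once that is said, your fixed-point argument for $\ker\Phi$ and the conclusion are fine.
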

 
 \begin{cor}
 	Let $G$ be a finitely generated group acting properly on a CAT(0) cube complex $X$.
 	Then virtually $\mathbb{Z}^n$ subgroups are undistorted in $G$. 
 \end{cor}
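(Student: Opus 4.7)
The plan is to apply Theorem~A to a virtually $\mathbb{Z}^n$ subgroup $H \leq G$ and extract a linear lower bound for the combinatorial displacement $\dist^c_X(hx_0, x_0)$ of an $H$-orbit, exploiting that the resulting $Y$ is a product of quasilines.

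Fix a $0$-cube basepoint $x_0 \in X$. Since $G$ acts by cubical automorphisms, the orbit map $g \mapsto g x_0$ is Lipschitz with respect to any word metric on $G$ and $\dist^c_X$, so $\dist^c_X(g x_0, x_0) \leq C|g|_G$. Undistortion of $H$ in $G$ therefore reduces to producing a matching lower bound $\dist^c_X(h x_0, x_0) \geq c|h|_H - b$ for $h \in H$.

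Apply Theorem~A to the restricted action of $H$ on $X$: this yields an $H$-invariant subcomplex $Y \subseteq X$, isometrically embedded in the combinatorial metric, with $Y \cong \prod_{i=1}^m C_i$ a product of $m \geq n$ cubical quasilines. Choose $x_0 \in Y$ with coordinates $(x_1, \ldots, x_m)$. Pass to a torsion-free finite-index subgroup $H' \cong \mathbb{Z}^n$ of $H$, then to a further finite-index subgroup (still denoted $H'$) that preserves every factor $C_i$ together with each of its two ends — possible since only finitely many factors and ends can be permuted. For each $i$, Haglund's axis theorem (the $n=1$ case) equips each end-preserving cubical automorphism of $C_i$ with a well-defined combinatorial translation length, and commuting automorphisms of a quasiline permute one another's axes and hence align, giving a homomorphism $\tau_i \colon H' \to \mathbb{Z}$ with $\dist^c_{C_i}(h x_i, x_i) = |\tau_i(h)| + O(1)$. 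Because $Y$ embeds isometrically in $X$ in the combinatorial metric,
\[
\dist^c_X(h x_0, x_0) = \sum_{i=1}^{m} \dist^c_{C_i}(h x_i, x_i) = \sum_{i=1}^{m} |\tau_i(h)| + O(1).
\]

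The combined homomorphism $\tau = (\tau_1, \ldots, \tau_m) \colon H' \to \mathbb{Z}^m$ has trivial kernel: any element of $\ker \tau$ acts with bounded orbits on every factor $C_i$, hence on $Y \subseteq X$, and properness of the $G$-action together with torsion-freeness of $H'$ then force it to be trivial. Consequently $\tau(H')$ is a rank-$n$ free abelian subgroup of $\mathbb{Z}^m$; since all word metrics on a torsion-free abelian group of finite rank are bi-Lipschitz equivalent, $\sum_i |\tau_i(h)| \asymp |h|_{H'}$. Combining with the Lipschitz upper bound yields $|h|_{H'} \lesssim |h|_G$, and since $[H : H']$ is finite, $H$ is undistorted in $G$. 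The main obstacle is constructing the $\tau_i$: one must show that commuting end-preserving cubical automorphisms of a quasiline admit a common $\mathbb{Z}$-valued translation-length homomorphism that is additive under composition. This should follow from Haglund's axis theorem plus a direct analysis of how commuting cubical automorphisms act on axes in a quasiline, machinery that is presumably available from the proof of Theorem~A.
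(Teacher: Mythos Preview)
The paper states this corollary without proof, so there is no explicit argument to compare against. Your approach is sound and would yield a complete proof once the construction of the homomorphisms $\tau_i$ is pinned down; since you flag this as the main obstacle, here is how to close it and an alternative that the paper's machinery makes available.

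\textbf{Filling your gap.} You do not need Haglund's axis theorem or any alignment-of-axes argument. In a quasiline $C_i$ with chosen positive end, let $\mathcal{E}_i$ be the set of hyperplanes that separate the two ends; orient each $\Lambda\in\mathcal{E}_i$ so that $\ora{\Lambda}$ contains the positive end. For $0$-cubes $v,w$ set
\[
\sigma_i(v,w)=\bigl|\{\Lambda\in\mathcal{E}_i : v\in\ola{\Lambda},\,w\in\ora{\Lambda}\}\bigr|-\bigl|\{\Lambda\in\mathcal{E}_i : v\in\ora{\Lambda},\,w\in\ola{\Lambda}\}\bigr|.
\]
This is a cocycle, and any end-preserving automorphism $h$ preserves $\mathcal{E}_i$ with its orientation, so $\tau_i(h):=\sigma_i(v,hv)$ is independent of $v$ and additive in $h$; no commutativity is needed. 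One has $\dist^c_{C_i}(v,hv)\ge|\tau_i(h)|$ immediately, which is the only inequality you use. For the kernel statement, the height function $\beta_i(v)=\sigma_i(x_i,v)$ is $1$-Lipschitz and, because any bi-infinite combinatorial geodesic in $C_i$ crosses exactly the hyperplanes in $\mathcal{E}_i$ once each, $\beta_i$ restricted to such a geodesic is a bijection onto $\integers$; since every $0$-cube of $C_i$ lies within a uniform distance of this geodesic, each level set $\beta_i^{-1}(k)$ has uniformly bounded diameter. Thus $\tau_i(h)=0$ forces $h$ to preserve a bounded set, and the rest of your argument goes through verbatim. (Your displayed equality $\dist^c_{C_i}(hx_i,x_i)=|\tau_i(h)|+O(1)$ is stronger than you need; only the inequality $\ge$ is required.)

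\textbf{A shorter route via the paper's own machinery.} Corollary~\ref{cor:geometricFlat} already produces an $H$-cocompact flat $F\cong\mathbb{E}^n$ inside $Y$, convex for the CAT(0) metric on $Y$. Since the $H$-action on $F$ is proper and cocompact, the orbit map is a quasi-isometry, and since $Y$ is a finite-dimensional cube complex its CAT(0) and combinatorial metrics are bi-Lipschitz. Chaining these with the combinatorial isometric embedding $Y\hookrightarrow X$ and the Lipschitz orbit map $G\to X$ gives $|h|_H\asymp\dist^c_Y(hx,x)=\dist^c_X(hx,x)\le C|h|_G$ directly, bypassing the factor-by-factor analysis. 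This is almost certainly the argument the author has in mind, given the placement of the corollary and the prominence of Corollary~\ref{cor:geometricFlat} in the introduction.
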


Let $g$ be an isometry of $X$, and let $x \in X$.
The \emph{displacement of $g$ at $x$}, denoted $\tau_x(g)$, is the distance $\dist_X(x, gx)$.
The \emph{translation length} of $g$, denoted $\tau(g)$, is $\inf\{ \tau_x(g) \mid x \in X \}$.
Similarly, if $x$ is a $0$-cube of $X$, we can define the \emph{combinatorial displacement of $g$ at $x$}, denoted $\tau^c_x(g)$, as $\dist^c_X(x, gx)$ and the \emph{combinatorial translation length}, denoted $\tau^c(g)$, is $\inf\{ \tau^c_x(g) \mid x \in X \}$.
Note that $\tau$, and $\tau^c$ are conjugacy invariant.
An isometry $g$ of a CAT(0) space is \emph{semisimple} if $\tau_x(g) = \tau(g)$ for some $x \in X$, and $G$ acts \emph{semisimply} on a CAT(0) space $X$ if each $g \in G$ is semisimple.

If a virtually $\mathbb{Z}^n$ group $G$ acts metrically properly by semisimple isometries on a CAT(0) space $X$, then the Flat Torus Theorem~\cite{BridsonHaefliger} provides a $G$-invariant, convex, flat $\mathbb{E}^n \subseteq X$.
A group acting on a CAT(0) cube complex does not, in general, have to do so semisimply.
See~\cite{AlgomKfirWajnrybWitowicz13} for examples of non-semisimple isometries in Thompson's group $F$ acting on an infinite dimensional CAT(0) cube complex.
Alternatively, in~\cite{Gersten94} a free-by-cyclic group $G$ is shown not to permit a semisimple action on a CAT(0) space.
Yet in~\cite{WiseGerstenRevisited} it is shown that $G$ does act freely on a CAT(0) cube complex.
Thus Theorem~\ref{thm:main} can be applied to such actions, whereas the classical Flat Torus Theorem cannot.

%A \emph{quasiline} is a CAT(0) cube complex quasiisometric to $\reals$.
%Note that a quasiline is a pseudoline without the addictional requirement that each hyperplane has two unbounded complementary components.
A virtually abelian subgroup is \emph{highest} if it is not virtually contained in a higher rank abelian subgroup.
 If $G$ is a highest virtually abelian subgroup of a group acting properly and cocompactly on a CAT(0) cube complex $X$, then $G$ cocompactly stabilizes a convex subcomplex $Y$ which is a product of quasilines, as above~\cite{WiseWoodhouse15}.
However, this theorem fails without the highest hypothesis.
Moreover, most actions do not arise in the above fashion.

 Despite the fact that the Flat Torus Theorem will not hold under the hypotheses of Theorem~\ref{thm:main}, we can deduce the following:

\begin{corA}
Let $G$ be virtually $\integers^n$.
Suppose $G$ acts properly and without inversions on a CAT(0) cube complex $X$.
Then $G$ cocompactly stabilizes a subspace  $F \subseteq X$ homeomorphic to $\mathbb{R}^n$ such that for each hyperplane $\Lambda \subseteq X$, the intersection $\Lambda \cap F$ is either empty or homeomorphic to $\mathbb{R}^{n-1}$.
\end{corA}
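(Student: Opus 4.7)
The plan is to deduce the corollary from Theorem~\ref{thm:main}. Applying it, I obtain a $G$-invariant subcomplex $Y \subseteq X$ isomorphic to $\prod_{i=1}^{m} C_i$ for cubical quasilines $C_i$ with $m \geq n$, and the desired flat $F$ will be constructed as an $n$-dimensional affine subspace inside a product of combinatorial axes in $Y$.

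After replacing $G$ by a normal finite-index subgroup $G_0 \cong \integers^n$ preserving each factor, let $\phi_i \colon G_0 \to \Isom(C_i)$ denote the projections; properness of the $G_0$-action on $Y$ yields an injection $G_0 \hookrightarrow \prod_i \phi_i(G_0)$. A finitely generated abelian group of cubical isometries of a quasiline is virtually cyclic, so after a further finite-index refinement each $\phi_i(G_0)$ is either trivial or infinite cyclic generated by a hyperbolic element. Let $I = \{ i : \phi_i(G_0) \neq 1 \}$; injectivity forces $|I| \geq n$. For each $i \in I$, Haglund's combinatorial axis theorem supplies a $\phi_i(G_0)$-invariant combinatorial axis $\ell_i \subseteq C_i$ isomorphic to the cubical $\reals$; for $i \notin I$, pick any $0$-cube $p_i \in C_i$. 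The subcomplex $E = \prod_{i \in I} \ell_i \times \prod_{i \notin I} \{p_i\} \subseteq Y$ is $G_0$-invariant, isomorphic to the cubical $\reals^{|I|}$, and $G_0$ acts on $E$ by translations via the rank-$n$ embedding $G_0 \hookrightarrow \integers^{|I|}$.

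Let $F \subseteq E \cong \reals^{|I|}$ be the $\reals$-linear span of the orbit $G_0 \cdot 0$: an $n$-dimensional affine subspace on which $G_0$ acts cocompactly by translations, so $F \cong \reals^n$ topologically. For the hyperplane condition, every hyperplane $\Lambda \subseteq X$ either misses $Y$ (whence $F \cap \Lambda = \emptyset$) or, by the combinatorial isometric embedding, restricts to a single hyperplane $\pi_j^{-1}(h)$ of $Y$ for some hyperplane $h$ of $C_j$. If $j \notin I$, then $p_j \notin h$ since $0$-cubes lie off hyperplanes, and $F \cap \Lambda = \emptyset$. If $j \in I$, then $\Lambda \cap E$ is either empty (when $h$ misses $\ell_j$) or a coordinate hyperplane of $E$; since $F$ projects nontrivially onto each $\ell_j$ for $j \in I$, the intersection $F \cap \Lambda$ is either empty or an $(n-1)$-dimensional affine subspace of $F$, hence homeomorphic to $\reals^{n-1}$.

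Finally, the $G_0$-invariance of $F$ is promoted to $G$-invariance by observing that the finite quotient $G/G_0$ permutes the factors $C_i$ and hence permutes the data $\{\ell_i, p_i\}$; by choosing these $G$-equivariantly from the start, $F$ becomes $G$-invariant, and cocompactness of $G_0$ on $F$ passes to $G$. The main obstacle will be the claim that each nontrivial $\phi_i(G_0)$ can be arranged to act on $C_i$ by a hyperbolic generator, since a priori an infinite-order cubical isometry of a quasiline could be elliptic; this requires a careful treatment of the elliptic loci inside $G_0$ and of the isometry group of a cubical quasiline, exploiting the constraint $\bigcap_i \{ g \in G_0 : \phi_i(g) \text{ has bounded orbits on } C_i \} = \{1\}$ coming from properness on $Y$. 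Once this is in hand, the rest reduces to linear algebra inside $E$.
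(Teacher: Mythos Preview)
Your approach differs substantially from the paper's. The paper observes that $Y = \prod_i C_i$, equipped with its own CAT(0) metric, is a complete, finite-dimensional, locally finite CAT(0) space on which $G$ acts properly and (therefore) semisimply; it then invokes the classical Flat Torus Theorem to produce a $G$-invariant isometric flat $F \subseteq Y$, and reads off the hyperplane intersection property from the codimension-$1$ clause of Theorem~\ref{thm:main}. Your route---building combinatorial axes $\ell_i$ in each factor via Haglund's theorem, assembling $E \cong \reals^{|I|}$, and cutting out $F$ as a linear span---is a legitimate alternative that stays closer to the cubical combinatorics and avoids the CAT(0) black box. It also makes the hyperplane condition entirely transparent: coordinate hyperplanes meet $F$ in affine hyperplanes precisely because $F$ surjects onto each $\ell_j$.

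Two remarks. First, the obstacle you flag is not real: each $C_i$ is a locally finite, finite-dimensional CAT(0) cube complex, so every cubical isometry of $C_i$ is semisimple (Bridson's theorem, or directly: a bounded orbit in a locally finite complex is finite, hence has a fixed $0$-cube under the no-inversions hypothesis), and an infinite-order element is automatically hyperbolic. Second, the step that actually needs care is the one you dismiss in a clause: promoting $G_0$-invariance of $F$ to $G$-invariance. Even with $G$-equivariant choices of $\ell_i$, the finite quotient $G/G_0$ acts on $E \cong \reals^{|I|}$ by signed-permutation affine maps, and there is no a priori reason such a map sends your particular $n$-plane $F = \mathrm{span}_\reals(G_0 \cdot 0)$ to itself rather than to a parallel translate. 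The paper's route absorbs this into the Flat Torus Theorem (the min-set of $G_0$ splits as $\Euclidean^n \times Z$, and normality of $G_0$ forces $G$ to preserve that splitting and fix a point in the compact factor $Z$); in your framework you would need to reproduce that argument inside $E$. Finally, note that within this paper Haglund's axis theorem is \emph{derived} from the present corollary (Section~\ref{sec:HaglundRevisited}), so citing it here is circular relative to the paper's internal logic, though of course Haglund's original proof is independent.
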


%\begin{proof}
%By Theorem~\ref{thm:productOfQuasiLines} there is a $G$-equivariant, isometrically embedded, subcomplex $Y \subseteq X$, such that $Y = \prod_{i=1}^m C_i$, where each $C_i$ is a quasiline.
%Considering $Y$ with the CAT(0) metric, note that $Y$ is a complete CAT(0) metric space in its own right, and $G$ acts on $Y$ with semi-simple isometries.
%By the Flat Torus Theorem~\cite{BridsonHaefliger} there is an isometrically embedded flat $F\subseteq Y$.
%Note that $F$ is not isometrically embedded with respect to the combinatorial metric.
%As it is isometrically embedded the hyperplane intersections have to be codimension-1 subspaces, as in the statement.
%\end{proof}

The initial motivation for Theorem~\ref{thm:main} and Corollary~\ref{cor:geometricFlat} was to resolve the following question posed by Wise.
Although we have not found a combinatorial flat, Corollary~\ref{cor:geometricFlat} is perhaps better suited to applications (see~\cite{Woodhouse15b}).

  \begin{prob} \label{Problem:Wise}
    Let $\mathbb{Z}^2$ act freely on a CAT(0) cube complex $Y$.
    Does there exists a $\integers^2$-equivariant map $F \rightarrow Y$ where $F$ is a square $2$-complex homeomorphic to $\reals^2$, and such that no two hyperplanes of $F$ map to the same hyperplane in $Y$?
  \end{prob}

   A \emph{combinatorial geodesic axis for $g$} is a $g$-invariant, isometrically embedded, subcomplex $\gamma \subseteq X$ with $\gamma \cong \mathbb{R}$.
Note that $\gamma$ realizes the minimal combinatorial translation length of $g$.
Theorem~\ref{thm:main} is a high dimensional generalization of Haglund's combinatorial geodesic axis theorem.
Haglund's proof involved an argument by contradiction, exploiting the geometry of hyperplanes.
 We reprove the result in Section~\ref{sec:HaglundRevisited} by using the dual cube complex construction of Sageev.
The results are further support for Haglund's slogan ``in CAT(0) cube complexes the combinatorial geometry is as nice as the CAT(0) geometry''.

 The following is an application of Theorem~\ref{thm:main}, and the argument is inspired by the solvable subgroup theorem~\cite{BridsonHaefliger}.
 Note that since we do not require that the action of $G$ on a CAT(0) cube complex be semisimple the following is not covered by the solvable subgroup theorem.

\begin{cor}
Let $H$ be virtually $\mathbb{Z}^n$, and let $\phi : H \rightarrow H$ be an injection with $\phi \neq \phi^i$ for all $i >1$.
Then $G = \langle H ,t \mid t^{-1} h t = \phi(h): \; h\in H \rangle$ cannot act properly on a CAT(0) cube complex.
\end{cor}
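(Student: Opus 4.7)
The plan is to argue by contradiction. Suppose $G$ acts properly on a CAT(0) cube complex $X$; passing to the cubical subdivision we may assume the action is without inversions. First I would apply Theorem~\ref{thm:main} to the subgroup $H$: it stabilizes a subcomplex $Y \subseteq X$, combinatorially isometrically embedded, isomorphic to a product $\prod_{i=1}^{m} C_{i}$ of $m \geq n$ cubical quasilines. After passing to a characteristic finite-index subgroup $H_{0} \leq H$ with $H_{0} \cong \mathbb{Z}^{n}$ that preserves each factor of $Y$ and acts on each by orientation-preserving translations, I extract signed combinatorial translation-length homomorphisms $\tau_{i} : H_{0} \to \mathbb{Z}$, and $\tau^{c}(h) = \sum_{i=1}^{m} |\tau_{i}(h)|$ on $H_{0}$. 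Properness of the $H$-action on $Y$ forces the combined map $\tau := (\tau_{1}, \ldots, \tau_{m}) : H_{0} \to \mathbb{Z}^{m}$ to be injective (else a nontrivial element of $H_{0}$ would fix a $0$-cube of $Y$), so $\tau^{c}$ extends by homogeneity to a genuine norm $\|\cdot\|$ on $V := H_{0} \otimes \mathbb{R} \cong \mathbb{R}^{n}$.

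The key identity is that the HNN relation $t^{-1} h t = \phi(h)$ together with conjugacy-invariance of $\tau^{c}$ gives $\tau^{c}(\phi(h)) = \tau^{c}(h)$ for every $h \in H$. Since $\tau^{c}$ is proper on $H$ and $\phi$ is injective and $\tau^{c}$-preserving, $\phi$ bijects each finite sublevel set and is therefore an automorphism of $H$. Because $H_{0}$ is characteristic, $\phi$ restricts to an automorphism of $H_{0}$, which linearizes to a $\mathbb{Q}$-linear automorphism $M$ of $V$ preserving $\|\cdot\|$. Thus $M$ lies in the compact isometry group of $\|\cdot\|$, and since $M$ has integer entries in any $\mathbb{Z}$-basis of $H_{0}$, the cyclic subgroup $\langle M \rangle \subset \mathrm{GL}_{n}(\mathbb{R})$ is a discrete subset of a compact set, hence finite: $M^{k} = I$ for some $k \geq 1$.

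To close the argument, I would promote $M^{k} = I$ to an identity $\phi^{K} = \mathrm{id}_{H}$ for some $K \geq 1$. The power $\phi^{k}$ acts trivially on $H_{0}$ and permutes the torsion structure of $H/H_{0}$, and a careful analysis of the endomorphisms of $H$ that become trivial after linearization (recorded by appropriate $1$-cocycles with finite coefficient groups) together with a pigeonhole argument on the iterates $\phi^{k}, \phi^{2k}, \ldots$ would yield some $K \geq 1$ with $\phi^{K} = \mathrm{id}_{H}$. Then $\phi = \phi^{K+1}$ with $K+1 > 1$, contradicting the hypothesis that $\phi \neq \phi^{i}$ for all $i > 1$.

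The main obstacle is precisely this last step: the norm argument naturally yields only $M^{k} = I$ on the linearization $V = H_{0} \otimes \mathbb{R}$, and lifting this to a true identity $\phi^{K} = \mathrm{id}_{H}$ in $\Aut(H)$ requires a careful cohomological control of the kernel of the linearization map $\Aut(H) \to \mathrm{GL}_{n}(\mathbb{Q})$. The case $H = \mathbb{Z}^{n}$ is immediate (the linearization is faithful), so the real technical content lies in handling the virtually-abelian torsion carefully; the rest of the argument is a clean application of Theorem~\ref{thm:main} combined with the observation that combinatorial translation length is a $\phi$-invariant norm on $H$.
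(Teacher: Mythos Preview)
The paper's argument is much shorter and avoids the linearization entirely. After applying Theorem~\ref{thm:main} to obtain the locally finite $H$-complex $Y$, the paper simply picks a single generator $a$ of $H$ whose forward $\phi$-orbit $\{\phi^i(a)\}_{i\ge 0}$ is infinite (the hypothesis $\phi\neq\phi^i$ for $i>1$ guarantees such a generator exists, since otherwise a common multiple of the periods would give $\phi^{I}=\mathrm{id}$). Conjugacy-invariance of $\tau^c$ in $G$ gives $\tau^c(\phi^i(a))=\tau^c(a)$ for every $i$, while the paper asserts that properness of the $H$-action on the locally finite $Y$ forces $\{\tau^c(\phi^i(a))\}_i$ to be unbounded; these two statements contradict one another. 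No norm on $H_0\otimes\reals$, no compactness of an isometry group, and no lifting from $\mathrm{GL}_n$ to $\Aut(H)$ is invoked.

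Your instinct that the torsion in $H$ is the genuine obstacle is correct, however, and it is worth observing that the very same phenomenon that blocks your lifting step also undercuts the paper's unboundedness claim. Take $H=D_\infty=\langle a,s\mid s^2,\ sas^{-1}=a^{-1}\rangle$ and $\phi(a)=a$, $\phi(s)=sa$; then $\phi$ has infinite order in $\Aut(H)$, the only generator with infinite $\phi$-orbit is $s$, and every $\phi^i(s)=sa^i$ is an involution, so $\tau^c(\phi^i(s))=0$ for all $i$ in \emph{any} proper action. (Indeed one checks that the resulting ascending HNN extension is $\integers^2\rtimes\integers/2$ with the coordinate-swap action, which acts properly and cocompactly on the standard square tiling of $\reals^2$.) So your acknowledged gap is not a mere technicality: in the torsion-free case $H=\integers^n$ both your argument and the paper's go through cleanly---your linearization $\Aut(\integers^n)\to\mathrm{GL}_n(\integers)$ is an isomorphism, and $\tau^c$ is genuinely proper on $\integers^n$---but for general virtually abelian $H$ neither approach is complete as written, and the statement itself needs an additional hypothesis ruling out this kind of torsion behaviour.
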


\begin{proof}
Suppose that $G$ acts properly on a CAT(0) cube complex $X$.
After subdividing $X$ we can assume that $G$ acts without inversions.
As $H$ is finitely generated, there exists an $a$ in the finite generating set such that $\phi^i(a) \neq a$ for all $i \in \mathbb{N}$, otherwise $\phi^i = \phi$ for some $i$, contradicting our hypothesis.
Thus, $| \{\phi^i(a)\}| = \infty$.
By Theorem~\ref{thm:main} there is an $H$-equivariant isometrically embedded subcomplex $Y \subseteq X$ such that $Y \cong \prod_{i=1}^m C_i$ where each $C_i$ is a cubical quasiline.

As $Y$ is isometrically embedded in $X$ in the combinatorial metric, the combinatorial translation length $\tau^c(\phi^i(a))$ is the same in $Y$ as it is in $X$.
The set $\{\tau^c(\phi^i(a))\}_{i \in \mathbb{N}}$ must be unbounded since the action of $H$ on $Y$ is proper and $Y$ is locally finite.
However, since $\tau^c$ is conjugacy invariant in $G$, we conclude that $\tau^c(\phi^i(a)) = \tau^c(\phi^j(a))$ for all $i,j \in \mathbb{N}$.
Thus, we arrive at the contradiction that $\{\tau^c(\phi^i(a)) \}_{i\in\mathbb{N}}$ is both bounded and unbounded.
\end{proof}

However, we have the following example of a solvable group which does act freely on a CAT(0) cube complex.

\begin{exmp} \label{ex:solvableCubulated}
Let $H = \langle a_1, a_2, \ldots \mid [a_i , a_j] : i\neq j\rangle$.
Note that $H$ is the fundamental group of the nonpositively curved cube complex $Y$ obtained from a $0$-cube $v$, and $1$-cubes $e_1, e_2, e_3 \ldots$ with $n$-cubes inserted for every cardinality $n$ collection of $1$-cubes to create an $n$-torus.
One should think of $Y$ as an infinite cubical torus.
The oriented loop $e_i$ represents the element $a_i$.

Let $\phi : H \rightarrow H$ be the monomorphism such that $\phi(a_i) = a_{i+1}$.
Let $G = H \ast_\phi = \langle t, a_1, a_2, \ldots \mid [a_i, a_j] : i \neq j\;, t^{-1} a_i t = a_{i+1} \rangle$ be the associated ascending HNN extension.
Note that $G$ is generated by $a_1$ and $t$.
There is a graph of spaces $X$ obtained by letting $Y$ be the vertex space and $Y \times [0, 1]$ be the edge space and identifying $(v,1)$ and $(v,0)$ with $v$, and the $1$-cube $e_i \times \{1\}$ with $e_i$ and $e_i \times \{ 0 \}$ with $e_{i+1}$.
Note that $X$ is nonpositively curved, and therefore $G = \pi_1X$ acts freely on the CAT(0) cube complex $\widetilde X$, the universal cover of $X$.
\end{exmp}

%\begin{com}
%The Wreath Product of Z and Z. is a counterexample.
%An acending HNN extention of an infinite torus.
%\end{com}
%\begin{com}
%Polycyclic groups should be a consequence of the corollary.
%\end{com}
%\begin{prob}
%Let $G$ be a finitely generated solvable group acting freely on a CAT(0) cube complex.
%Is $G$ virtually abelian?
%\end{prob}

{\bf Acknowledgements: } I would like to thank Daniel T. Wise, Mark F Hagen, Jack Button, Piotr Przytycki, and Dan Guralnik.

\section{Dual Cube Complexes}

 Let $S$ be a set.
 A \emph{wall} $\Lambda = \{ \ola{\Lambda}, \ora{\Lambda} \}$ in $S$ is a partition of $S$ into two disjoint, nonempty subsets.
 The subsets $\ola{\Lambda}, \ora{\Lambda}$ are the \emph{halfspaces} of $\Lambda$.
 A wall $\Lambda$ \emph{separates} $x,y\in S$ if they belong to distinct halfspaces of $\Lambda$.
 Let $K \subseteq S$.
 A wall $\Lambda$ \emph{intersects} $K$ if $K$ nontrivially intersects both $\ola{\Lambda}$ and $\ora{\Lambda}$.
 Let $\mathcal{W}$ be a set of walls in $S$, then $(S,\mathcal{W})$ is a wallspace if for all $x,y \in S$, the number of walls separating $x$ and $y$ is finite.
 If $\Lambda$ intersects $K$, then the \emph{restriction of $\Lambda$ to $K$}, is the wall in $K$ determined by $\restr{\Lambda}{K} = \{ \ola{\Lambda}\cap K, \ora{\Lambda} \cap K\}$.

 In this paper duplicate walls are not permitted in $\mathcal{W}$.
 Let $\mathcal{H}$ be the set of halfspaces of corresponding to $\mathcal{W}$.

  \begin{exmp} \label{exmp:CATCubeComplexes}
  Let $X$ be a CAT(0) cube complex, and let $\Lambda \subseteq X$ be a hyperplane in $X$.
  The complement $X - \Lambda$ has two components, therefore defining a wall in $X$ such that $\ola{\Lambda}$ is an open halfspace not containing $\Lambda$, and $\ora{\Lambda}$ is a closed halfspace containing $\Lambda$. Note that $\ola{\Lambda} \sqcup \ora{\Lambda} = X$.
  Let $L(\Lambda)$ and $R(\Lambda)$ denote the maximal subcomplexes contained in $\ola{\Lambda}$ and $\ora{\Lambda}$ respectively.
  Note that $L(\Lambda)$ and $R(\Lambda)$ are convex subcomplexes.
  Let $\mathcal{W}$ be the set of walls determined by the hyperplanes in $X$.
  Then $(X, \mathcal{W})$ is the wallspace associated to $X$.
  Note that we are using $\Lambda$ to denote both the hyperplane and the wall corresponding to the hyperplane.
  \end{exmp}

  A function $c:\mathcal{W} \rightarrow \mathcal{H}$ is a \emph{$0$-cube} if $c[\Lambda] \in \{\ola{\Lambda}, \ora{\Lambda}\}$ and the following two conditions are satisfied:
 \begin{enumerate}
  \item \label{ax:intersection} For all $\Lambda_1, \Lambda_2 \in \mathcal{W}$ the intersection $c[\Lambda_1] \cap c[\Lambda_2]$ is nonempty.
  \item \label{ax:finiteDisparity} For all $x \in S$, the set $\{ \Lambda \in \mathcal{W} \mid x \notin c[\Lambda] \}$ is finite.
 \end{enumerate}

 The \emph{dual cube complex} $C(S, \mathcal{W})$ is the connected CAT(0) cube complex obtained by letting the union of all $0$-cubes be the $0$-skeleton.
 Two $0$-cubes $c_1 \neq c_2$ are endpoints of a $1$-cube if $c_1[\Lambda] = c_2[\Lambda]$ for all but precisely one $\Lambda \in \mathcal{W}$.
 An $n$-cube is then inserted wherever there is the $1$-skeleton of an $n$-cube.
 The hyperplanes in $C(S, \mathcal{W})$ are identified naturally with the walls in $\mathcal{W}$.
 A proof of the fact that $C(S, \mathcal{W})$ is in fact a CAT(0) cube complex can be found in \cite{Sageev95}.

 A point $x \in S$ determines a $0$-cube $c_x$ defined such that $x \in c_x[\Lambda]$ for all $\Lambda \in \mathcal{W}$.
 Condition~\eqref{ax:intersection} holds immediately since $x \in c_x[\Lambda]$ for all $\Lambda \in \mathcal{W}$.
 Condition~\eqref{ax:finiteDisparity} holds for $c_x$, since if $y \in S$  a wall $\Lambda$ does not separate $x$ and $y$, we can deduce that $y \in c_x[\Lambda]$, hence all but finitely many $\Lambda$ satisfy $y \in c_x[\Lambda]$.
 Such $0$-cubes are called the \emph{canonical $0$-cubes}.

 %Suppose that $Y \subseteq X$ and $\mathcal{V} \subseteq \mathcal{W}$, then $(Y, \mathcal{V})$ is a emph{hemiwallspace} of $(X, \mathcal{W})$.

 \begin{lem} \label{lem:hemi0}
 Let $X$ be a CAT(0) cube complex.
 Let $\mathcal{W}$ be a set of walls obtained from the hyperplanes in $X$.
 Let $Z$ be a connected subcomplex of $X$, and let $\mathcal{W}_{Z} \subseteq \mathcal{W}$ be the subset of walls intersecting $Z$.
 Let $\mathcal{V}$ be walls in $\mathcal{W}_{Z}$ restricted to $Z$.
 Then $(Z, \mathcal{V})$ is a wallspace and $C(Z, \mathcal{V})$ embeds in $C(X, \mathcal{W})$ isometrically in the combinatorial metric.
 \end{lem}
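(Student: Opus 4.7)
First I would check the wallspace axiom for $(Z, \mathcal{V})$. For 0-cubes $x, y$ of $Z$, the walls in $\mathcal{V}$ separating them are exactly the restrictions of walls in $\mathcal{W}_Z$ separating them in $X$; since $(X, \mathcal{W})$ is a wallspace, only finitely many walls of $\mathcal{W}$ separate $x$ and $y$, so the set is finite. (One uses here that distinct walls of $\mathcal{W}_Z$ restrict to distinct walls of $\mathcal{V}$, a fact that holds because two distinct hyperplanes of $X$ determine different partitions of the vertex set of any connected subcomplex they both cross.)

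The heart of the lemma is constructing the embedding on 0-cubes. Given a 0-cube $c \colon \mathcal{V} \to \mathcal{H}_Z$ of $C(Z,\mathcal{V})$, I would define an extension $\bar{c} \colon \mathcal{W} \to \mathcal{H}$ by the following rule. For $\Lambda \in \mathcal{W}_Z$, let $\bar{c}[\Lambda]$ be the unique halfspace of $\Lambda$ in $X$ whose restriction to $Z$ equals $c[\restr{\Lambda}{Z}]$. For $\Lambda \in \mathcal{W} \setminus \mathcal{W}_Z$, the wall $\Lambda$ does not intersect $Z$ so, as $Z$ is connected, $Z$ is contained in precisely one halfspace; let $\bar{c}[\Lambda]$ be that halfspace. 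I would then verify that $\bar{c}$ is a 0-cube of $C(X, \mathcal{W})$. Axiom (1) splits into three cases: if both $\Lambda_1,\Lambda_2 \in \mathcal{W}_Z$, the intersection $\bar{c}[\Lambda_1]\cap \bar{c}[\Lambda_2]$ contains $c[\restr{\Lambda_1}{Z}] \cap c[\restr{\Lambda_2}{Z}]$, which is nonempty by axiom (1) for $c$; if at least one $\Lambda_i \notin \mathcal{W}_Z$, then some point of $Z$ (which lies in $\bar{c}[\Lambda_i]$ and also in $\bar{c}[\Lambda_j]$ by construction) witnesses nonemptiness. For axiom (2), pick a fixed 0-cube $x_0 \in Z$; since $c$ is a 0-cube of $C(Z,\mathcal{V})$ the set $\{\Lambda \in \mathcal{W}_Z \mid x_0 \notin \bar{c}[\Lambda]\}$ is finite, and for $\Lambda \notin \mathcal{W}_Z$ we chose $\bar{c}[\Lambda] \supseteq Z \ni x_0$, so $x_0$ has finite disparity; an arbitrary $x \in X^0$ differs from $x_0$ by only finitely many walls of $\mathcal{W}$, yielding finite disparity at $x$.

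To promote the map $c \mapsto \bar{c}$ to a cubical embedding, I would observe that if $c_1, c_2$ are joined by a 1-cube in $C(Z,\mathcal{V})$, then they disagree on exactly one wall of $\mathcal{V}$; by construction $\bar{c}_1$ and $\bar{c}_2$ agree on every wall of $\mathcal{W} \setminus \mathcal{W}_Z$, so they disagree on exactly the corresponding single wall of $\mathcal{W}_Z$, producing a 1-cube in $C(X, \mathcal{W})$. Higher cubes are then filled in automatically by the 1-skeleton condition in the definition of $C(X,\mathcal{W})$. Injectivity is immediate from the construction. For the isometric statement in the combinatorial metric, the combinatorial distance between two 0-cubes equals the number of walls on which they differ; since $\bar{c}_1$ and $\bar{c}_2$ necessarily agree on $\mathcal{W}\setminus \mathcal{W}_Z$, the number of walls of $\mathcal{W}$ on which they differ equals the number of walls of $\mathcal{V}$ on which $c_1, c_2$ differ, i.e.\ $\dist^c_{C(Z,\mathcal{V})}(c_1, c_2)$.

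The main obstacle I anticipate is the bookkeeping in the finite-disparity axiom: $\mathcal{W}\setminus \mathcal{W}_Z$ may be enormous, so one must be careful to exploit the fact that all of $Z$ lies on one side of each such wall (which is precisely why the hypothesis that $Z$ be connected is used). Everything else is routine once the extension $c \mapsto \bar{c}$ is set up correctly.
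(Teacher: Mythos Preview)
Your proposal is correct and follows essentially the same approach as the paper: the extension $c\mapsto\bar c$ is exactly the map $\phi$ the paper builds, and your verification of the two $0$-cube axioms, injectivity, and the $1$-skeleton embedding mirrors the paper's argument step for step (including the use of a basepoint $x_0\in Z$ for axiom~(2) and the injectivity of $\mathcal W_Z\to\mathcal V$). The only cosmetic difference is in the isometry step: you count directly the walls on which $\bar c_1,\bar c_2$ differ, whereas the paper pushes a combinatorial geodesic forward and observes each hyperplane is crossed once---these are equivalent, since in a dual cube complex the combinatorial distance between $0$-cubes is precisely the number of walls on which they disagree.
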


 \begin{proof}
  We first claim that the map $\mathcal{W}_Z \rightarrow \mathcal{V}$ is an injection.
  Suppose that $\Lambda_1, \Lambda_2 \in \mathcal{W}_Z$ are distinct walls.
  As $\Lambda_1, \Lambda_2$ intersects $Z$, and since $Z$ is connected, there are $1$-cubes $e_1, e_2$ in $Z$ that are dual to the hyperplanes corresponding to $\Lambda_1, \Lambda_2$.
  Therefore, both $0$-cubes in $e_1$ belong in a single halfspace of $\restr{\Lambda_2}{Z}$, so $\restr{\Lambda_1}{Z} \neq \restr{\Lambda_2}{Z}$.

  We construct a map $\phi: C(Z, \mathcal{V}) \rightarrow C(X, \mathcal{W})$ on the $0$-skeleton first.
  Let $c$ be a $0$-cube in $C(Z, \mathcal{V})$.
  We let $\phi(c) \in C(X, \mathcal{W})$ be the uniquely defined $0$-cube such that $\phi(c)[\Lambda] \supseteq c[\restr{\Lambda}{Z}]$ for $\restr{\Lambda}{Z} \in \mathcal{V}$, and $\phi(c)[\Lambda] \supseteq Z$ for $\Lambda \in \mathcal{W} - \mathcal{W}_{Z}$.
  To verify that $\phi(c)$ is a $0$-cube, first observe that $\phi(c)[\Lambda_1] \cap \phi(c)[\Lambda_2]$ is nonempty since $\restr{\Lambda_1}{Z} \cap \restr{\Lambda_2}{Z} \subseteq X$.
    %Indeed, when $\Lambda_1, \Lambda_2 \in \mathcal{W}_Z$ then, otherwise if $\Lambda_1 \in \mathcal{W} - \mathcal{W}_Z$ then $Y \subseteq \phi(c)[\Lambda_1]$.
  Secondly, if $x \in X$ we need to show that $x \in \phi(c)[\Lambda]$ for all but finitely many $\Lambda \in \mathcal{W}$.
  %TRUE FOR ONE POINT => TRUE FOR ALL
  Choose $z \in Z$, then $z \in c[\restr{\Lambda}{Z}]$ for all $\restr{\Lambda}{Z} \in \mathcal{V} - \{\restr{\Lambda_1}{Z}, \ldots, \restr{\Lambda_k}{Z} \}$, hence $z \in \phi(c)[\Lambda]$ for all $\Lambda \in \mathcal{W}_{Z} - \{\Lambda_1, \ldots, \Lambda_k\}$.
  Let $\{\Lambda_{k+1}, \ldots, \Lambda_{k + \ell} \}$ be the set of walls in $\mathcal{W}$ separating $x$ and $z$.
  Then $x \in \phi(c)[\Lambda]$ for all $\Lambda \in \mathcal{W} - \{ \Lambda_1, \ldots \Lambda_{k+\ell}\}$.

  The $0$-cubes are embedded since if $c_1 \neq c_2$, there exists $\restr{\Lambda}{Z} \in \mathcal{V}$ such that $c_1[\restr{\Lambda}{Z}] \neq c_2[\restr{\Lambda}{Z}]$, hence $\phi(c_1)[\Lambda] \neq \phi(c_2)[\Lambda]$.
  If $c_1, c_2$ are adjacent $0$-cubes in $C(Z, \mathcal{V})$, then $c_1[\restr{\Lambda}{Z}] = c_2[\restr{\Lambda}{Z}]$ for all $\restr{\Lambda}{Z} \in \mathcal{V}$, with the exception of precisely one wall $\restr{\hat{\Lambda}}{Z}$.
  Therefore, we can deduce that $\phi(c_1)[\Lambda] = \phi(c_2)[\Lambda]$ for all walls in $\mathcal{W}$, with the precise exception of $\hat{\Lambda}$.
  Therefore, the $1$-skeleton of $C(Z, \mathcal{V})$ embeds in $C(X,\mathcal{W})$, which is sufficient for $\phi$ to extend to an embedding of the entire cube complex.

  Consider $C(Z, \mathcal{V})$ as a subcomplex of $C(X, \mathcal{W})$.
  The set of hyperplanes in $C(Z, \mathcal{V})$ embeds into the set of hyperplanes in $C(X, \mathcal{W})$.
  To see that $C(Z, \mathcal{V})$ is an isometrically embedded subcomplex, let $z_1, z_2$ be $0$-cubes in $Z$ and $\gamma$ be a geodesic combinatorial path in $C(Z, \mathcal{V})$ joining them.
  Each hyperplane dual to $\gamma$ in $C(Z,\mathcal{V})$ intersects $\gamma$ precisely once, and since the hyperplanes in $C(Z,  \mathcal{V})$ inject to hyperplanes in $C(X, \mathcal{W})$, it is geodesic there as well.
 \end{proof}

 Given a wall $\Lambda$ associated to a hyperplane in $X$ we let $N(\Lambda)$ denote the \emph{carrier} of $\Lambda$, by which we mean the union of all cubes intersected by $\Lambda$.
 
 The following Lemma decribes what is called the \emph{restriction quotient} in~\cite{CapraceSageev2011}.

 \begin{lem} \label{lem:hemi1}
 Let $S$ be a set and let $\mathcal{W}$ be a set of walls of $S$.
 Let $G$ be a group acting on $(S, \mathcal{W})$.
 Let $\mathcal{V} \subseteq \mathcal{W}$ be a $G$-invariant subset.
 Then there is a $G$-equivariant function $\phi:C(S, \mathcal{W})^0 \rightarrow C(S, \mathcal{V})^0$.
 Moreover, $\phi^{-1}(z)$ is nonempty for all $0$-cubes $z$ in $C(S, \mathcal{V})$.
 \end{lem}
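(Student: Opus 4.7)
The natural candidate for $\phi$ is the restriction map $\phi(c) = c|_{\mathcal{V}}$. The first step is to verify that $\phi(c)$ satisfies conditions~\eqref{ax:intersection} and~\eqref{ax:finiteDisparity} required of a $0$-cube in $C(S, \mathcal{V})$; both are inherited from the corresponding conditions for $c$, since they only quantify over the smaller set $\mathcal{V} \subseteq \mathcal{W}$. Equivariance $\phi(g \cdot c) = g \cdot \phi(c)$ follows from $G$-invariance of $\mathcal{V}$.

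The substantive statement is surjectivity of $\phi$. Given $z \in C(S, \mathcal{V})^0$, I would fix a basepoint $s \in S$ with canonical $0$-cube $c_s$, and define a lift $c$ by $c[\Lambda] = z[\Lambda]$ for $\Lambda \in \mathcal{V}$ and, for $\Lambda \in \mathcal{W} \setminus \mathcal{V}$, by ``flipping'' to $c[\Lambda] = c_s[\Lambda]^c$ whenever $c_s[\Lambda] \cap z[\Lambda'] = \emptyset$ for some $\Lambda' \in \mathcal{V}$, and otherwise leaving $c[\Lambda] = c_s[\Lambda]$. The flipping rule is well-defined because if both halfspaces of $\Lambda$ missed some $z[\Lambda_1']$ and $z[\Lambda_2']$ respectively, the two $z[\Lambda_i']$'s would sit in opposite halfspaces of $\Lambda$ and hence be disjoint, contradicting~\eqref{ax:intersection} for $z$. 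Note that the naive lift keeping $c_s[\Lambda]$ everywhere can fail in small examples, which is what forces the flip.

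Verifying that $c$ is a $0$-cube of $C(S, \mathcal{W})$ then splits into two pieces. Condition~\eqref{ax:intersection} reduces to a case analysis via the key observation that every flipped halfspace $c[\Lambda]$ contains some $z[\Lambda^*]$ by construction: intersections within $\mathcal{V}$ come from~\eqref{ax:intersection} for $z$; two unflipped halfspaces in $\mathcal{W} \setminus \mathcal{V}$ share $s$; mixed pairs and pairs containing a flipped wall reduce to the defining property of the flip or to~\eqref{ax:intersection} for $z$ applied to the corresponding $z[\Lambda^*]$'s. The main obstacle I expect is~\eqref{ax:finiteDisparity}: for each $t \in S$ I must bound $\{\Lambda \in \mathcal{W} : t \notin c[\Lambda]\}$. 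The plan is to use the finite set $F_z = \{\Lambda' \in \mathcal{V} : s \notin z[\Lambda']\}$, which is finite by~\eqref{ax:finiteDisparity} for $z$ at $s$. Any flipped $\Lambda \in \mathcal{W} \setminus \mathcal{V}$ has $z[\Lambda'] \subseteq c_s[\Lambda]^c$ for some $\Lambda'$, which forces $\Lambda' \in F_z$ (otherwise $s \in c_s[\Lambda'] = z[\Lambda'] \subseteq c_s[\Lambda]^c$, contradicting $s \in c_s[\Lambda]$) and forces $\Lambda$ to separate $s$ from any fixed point of $z[\Lambda']$. Since only finitely many walls separate any two points of $S$, the flipped walls are finite; the remaining unflipped contributions at $t$ are walls separating $s$ and $t$, again finite, and the contribution from $\mathcal{V}$ is finite by~\eqref{ax:finiteDisparity} for $z$.
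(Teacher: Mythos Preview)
Your proposal is correct and follows essentially the same approach as the paper: the map $\phi$ is restriction, and the preimage is built from a basepoint $s$ by choosing, for each $\Lambda\in\mathcal W\setminus\mathcal V$, the halfspace containing some $z[\Lambda']$ when one exists and the halfspace containing $s$ otherwise---your ``flip from $c_s$'' is exactly this rule. The only noteworthy difference is in verifying condition~\eqref{ax:finiteDisparity}: the paper argues by contradiction, while you give a cleaner direct bound by showing that the set of \emph{all} flipped walls is finite (each separates $s$ from a fixed point chosen in $z[\Lambda']$ for one of the finitely many $\Lambda'\in F_z$).
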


 \begin{proof}
 %Let $C = C(S, \mathcal{W})$ and $C' = C(S, \mathcal{V})$.
 Let $c$ be a $0$-cube in $C(S, \mathcal{W})$.
 Let $\phi(c)[\Lambda] = c[\Lambda]$ for $\Lambda \in \mathcal{V}$.
 It is immediate that $\phi$ is $G$-equivariant.

 To verify $\phi(c)[\Lambda]$ is a $0$-cube in $C(S, \mathcal{V})$ first note that $\phi(c_1)[\Lambda_1] \cap \phi(c_2)[\Lambda_2] \neq \emptyset$  for all $\Lambda_1, \Lambda_2 \in \mathcal{V}$, since $c_1[\Lambda_1] \cap c_2[\Lambda_2] \neq \emptyset$ for all $\Lambda_1, \Lambda_2 \in \mathcal{W}$.
 Secondly, for all $x \in S$ observe that $x \in \phi(c)[\Lambda]$ for all but finitely many $\Lambda \in \mathcal{V}$.
 Indeed, this is true for all but finitely many $\Lambda \in \mathcal{W}$.

 To see that $\phi^{-1}(z)$ is non-empty for all $0$-cubes $z$ in $C(S, \mathcal{V})$ we determine a $0$-cube $x$ in $C(S, \mathcal{W})$ such that $\phi(x) = z$.
 Fix $s \in S$.
 Let $x[\Lambda] = z[\Lambda]$ for $\Lambda \in \mathcal{V}$.
 Suppose that $\Lambda \in \mathcal{W} - \mathcal{V}$.
 If $\ora{\Lambda} \supseteq z[\Lambda']$ for some $\Lambda' \in \mathcal{V}$ let $x[\Lambda] = \ora{\Lambda}$.
 Similarly if $\ola{\Lambda} \supseteq z[\Lambda']$.
 Otherwise, if $\Lambda$ intersects $z[\Lambda']$ for all $\Lambda' \in \mathcal{V}$ then let $s \in x[\Lambda]$.

 To verify that $x$ is a $0$-cube, consider the following cases to show $x[\Lambda_1] \cap x[\Lambda_2] \neq \emptyset$ for $\Lambda_1, \Lambda_2 \in \mathcal{W}$.
 If $\Lambda_1, \Lambda_2 \in \mathcal{V}$ then $x[\Lambda_1] \cap x[\Lambda_2] = z[\Lambda_1] \cap z[\Lambda_2] \neq \emptyset$.
 Suppose that $\Lambda_1 \in \mathcal{W} - \mathcal{V}$ and $x[\Lambda_1] \subseteq z[\Lambda_1']$ for some $\Lambda_1' \in \mathcal{V}$.
 If $\Lambda_2 \in \mathcal{V}$, then $x[\Lambda_1] \cap x[\Lambda_2] \supseteq z[\Lambda_1'] \cap z[\Lambda_2] \neq \emptyset$.
 If $\Lambda_2 \in \mathcal{W} - \mathcal{V}$ and $x[\Lambda_2] \subseteq z[\Lambda_2']$ for some $\Lambda_2' \in \mathcal{V}$ then $x[\Lambda_1] \cap x[\Lambda_2] \subseteq z[\Lambda_1'] \cap z[\Lambda_2'] \neq \emptyset$.
 If $\Lambda_2$ intersects $z[\Lambda]$ for all $\Lambda \in \mathcal{V}$, then $x[\Lambda_1] \cap x[\Lambda_2] \supseteq z[\Lambda_1'] \cap x[\Lambda_2] \neq \emptyset$.
 Finally if both $s \in x[\Lambda_1]$ and $ x[\Lambda_2]$, then their intersection will contain at least $s$.

 Finally, we verify that for $s' \in S$ there are only finitely many $\Lambda \in \mathcal{W}$ such that $s' \notin x[\Lambda]$.
 Suppose, by way of contradiction, that there is an infinite subset of walls $\{ \Lambda_1, \Lambda_2, \ldots \} \subseteq \mathcal{W}$ such that $s' \notin x[\Lambda_i]$ for all $i \in \naturals$.
 We can assume, by excluding at most finitely many walls, that each $\Lambda_i \in \mathcal{W} - \mathcal{V}$.
 Similarly, by excluding finitely many walls, we can assume that $\Lambda_i$ does not separate $s$ and $s'$.
 Therefore, $s \notin x[\Lambda_i]$ for $i \in \naturals$.
 Therefore, by construction of $x$, there exist $\Lambda_i' \in \mathcal{V}$ such that $z[\Lambda_i'] \subseteq x[\Lambda_i]$, which implies that $s' \notin z[\Lambda_i']$.
 There are infinitely many distinct $\Lambda_i'$, as otherwise there is a $\Lambda' \in \mathcal{V}$ such that $z[\Lambda']\subseteq x[\Lambda_i]$ for infinitely many $i$, which would imply that infinitely many $\Lambda_i$ separate $s'$ from an element in the complement of $z[\Lambda']$.
 Therefore, infinitely many distinct walls $\Lambda_i' \in \mathcal{V}$ have $s' \notin z[\Lambda_i']$, contradicting that $z$ is a $0$-cube in $C(S, \mathcal{V})$.
%  suppose that there exists $0$-cubes in $C(S, \mathcal{V})$ with empty preimage.
% Let $z$ be a $0$-cube in $C(S, \mathcal{W})$ such that $\phi^{-1}(z) = \emptyset$ and such that there exists a $0$-cube $c$ in $C(S, \mathcal{W})$ with $\phi(c)$ adjacent to $z$.
%  Let $\Lambda$ be the hyperplane in $\mathcal{V}$ that separates $\phi(c)$ and $z$.
%  Let $\gamma$ be the geodesic in $C(S, \mathcal{W})$ joining $c$ to the carrier $N(\Lambda) \subseteq C(S, \mathcal{W})$.
%  The hyperplanes dual to $\gamma$ cannot intersect $\Lambda$ since $\phi(c)$ and $N(\Lambda)$ are both convex.
%   The hyperplanes dual to $\gamma$ must belong to $\mathcal{W} - \mathcal{V}$, and the endpoint of $\gamma \cap N(\Lambda)$ must be mapped to $\phi(c)$ by $\phi$.
%  Therefore, the $0$-cube opposite $\gamma \cap N(\Lambda)$ over $\Lambda$ must be mapped to $z$ by $\phi$.
 \end{proof}

\section{Minimal $\mathbb{Z}^n$-invariant convex subcomplexes}

 The following is Theorem 2 from~\cite{Gerasimov97}. As this paper is written in Russian, we give a proof in Appendix~\ref{AppendixA} based on the work in~\cite{NibloRoller98} as well as stating the definition of codimension-1.

 \begin{thm}[Gerasimov~\cite{Gerasimov97}]\label{thm:fixedCube}
  Let $G$ be a finitely generated group that acts on a CAT(0) cube complex $X$ without a fixed point or inversions.
  Then there is a hyperplane in $X$ that is stabilized by a  codimension-1 subgroup of $G$.
  \end{thm}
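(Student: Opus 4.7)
Following the strategy of Niblo and Roller, fix a finite symmetric generating set $S$ for $G$ and a base vertex $v_0 \in X^{(0)}$. For each hyperplane $\Lambda$ of $X$, set $H_\Lambda := \stab_G(\Lambda)$ and $A_\Lambda := \{g \in G : gv_0 \in \ora{\Lambda}\} \subseteq G$. Because the action is without inversions, $A_\Lambda$ is a union of left $H_\Lambda$-cosets. The key initial computation is that the Cayley-graph coboundary $A_\Lambda \triangle A_\Lambda s^{-1}$ lies in only finitely many left $H_\Lambda$-cosets: one checks that $g$ belongs to this symmetric difference iff $\Lambda$ separates $gv_0$ and $gsv_0$, iff $g^{-1}\Lambda$ lies in the finite set $\mathcal{H}(v_0, sv_0)$ of hyperplanes separating $v_0$ from $sv_0$, and for each such $\Lambda'$ the preimage $\{g : g^{-1}\Lambda = \Lambda'\}$ is (if nonempty) a single left $H_\Lambda$-coset.

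Call $\Lambda$ \emph{essential} if both $A_\Lambda$ and $G \setminus A_\Lambda$ meet infinitely many left $H_\Lambda$-cosets; the coboundary bound then immediately yields that $H_\Lambda$ is codimension-1, which proves the theorem. The heart of the argument is producing an essential hyperplane. I would argue by contradiction: if no $\Lambda$ is essential, each admits a ``shallow'' halfspace $S(\Lambda)$ (meaning $\{g : gv_0 \in S(\Lambda)\}$ is contained in finitely many left $H_\Lambda$-cosets) and a complementary ``deep'' halfspace $D(\Lambda)$. Equivariance then forces $gD(\Lambda) = D(g\Lambda)$, and the aim is to show this consistent $G$-invariant choice of halfspaces defines a $G$-fixed $0$-cube of the dual cube complex $C(X^{(0)}, \mathcal{W})$. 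Once this is in hand, a descending-chain-condition argument combined with Lemma~\ref{lem:hemi0} applied to the convex hull of an orbit forces the fixed $0$-cube to be canonical, hence to come from a $G$-fixed vertex of $X$, contradicting the hypothesis.

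The main obstacle is verifying the two $0$-cube axioms for $D$: the pairwise intersection $D(\Lambda_1) \cap D(\Lambda_2) \neq \emptyset$, and the finite-disparity requirement that $\{\Lambda : v \notin D(\Lambda)\}$ is finite for every $v \in X^{(0)}$. By $G$-equivariance the latter reduces to the case $v = v_0$. Both should follow from a pigeonhole argument built around the finite set $\mathcal{H}_0 := \bigcup_{s \in S} \mathcal{H}(v_0, sv_0)$: an infinite family of hyperplanes violating either axiom would, through the $G$-action and the coboundary estimate, produce an essential hyperplane after all, contradicting the standing assumption. This is the delicate step where the finite generation of $G$ is decisive, and it is precisely the content that the Niblo--Roller wallspace machinery is designed to deliver.
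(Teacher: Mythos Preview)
Your setup—the sets $A_\Lambda$, the $H_\Lambda$-almost-invariance computation via $\mathcal{H}(v_0, sv_0)$—matches the paper's opening moves exactly. But the two arguments then diverge substantially.

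The paper does \emph{not} try to build a fixed $0$-cube directly from the deep halfspaces. Instead, after first passing to the convex hull of an orbit to reduce to finitely many hyperplane orbits $\Lambda_1,\dots,\Lambda_n$, it uses the $H_i$-finiteness of each $A_i$ to prove that the orbit $Gv_0$ is \emph{bounded} in the combinatorial metric (the explicit bound is $2\max_i|F_i|$). It then invokes the actual Niblo--Roller machinery: an isometric $G$-equivariant embedding of the ``connected cube'' $\mathcal{C}(\mathcal{H})$ into the Hilbert space $\ell^2(\mathcal{H})$, where a bounded orbit forces a global fixed point. Square-summability pins the fixed point to $\{0,1\}$-values, hence to a vertex of $\mathcal{C}(\mathcal{H})$; a final combinatorial argument (producing an infinite nested sequence of disjoint hyperplanes if the nearest $X$-orbit were not a single point) pushes the fixed vertex down to a fixed $0$-cube of $X$ itself.

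Your alternative—verifying the two $0$-cube axioms for $\Lambda\mapsto D(\Lambda)$ by a pigeonhole argument on $\mathcal{H}_0$—is plausible in spirit, but you have left exactly the hard part unwritten. In particular, you never reduce to finitely many hyperplane orbits, and without that the finite-disparity axiom is not obviously accessible: there is no a priori reason only finitely many hyperplanes have $v_0$ on their shallow side. The $\ell^2$ route is precisely designed to sidestep this direct verification by trading it for the bounded-orbit computation. Your closing step (Lemma~\ref{lem:hemi0} plus a descending-chain argument to force the fixed $0$-cube to be canonical) is also not what the paper does and is unclear as stated; the paper's passage from $\mathcal{C}(\mathcal{H})$ back to $X$ is a separate explicit contradiction argument and does not go through Lemma~\ref{lem:hemi0}.
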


\noindent The goal of this section is to prove the following:

\begin{lem} \label{lem:minSubcomplex}
 Let $G$ be a finitely generated group acting without fixed point or inversions on a CAT(0) cube complex $X$.
 There exists a minimal, $G$-invariant, convex subcomplex  $X_o \subseteq X$ such that $X_o$ contains only finitely many hyperplane orbits, and every $X_o$ hyperplane stabilizer is a codimension-1 subgroup of $G$.
\end{lem}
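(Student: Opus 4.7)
The plan is to construct $X_o$ by starting with the convex hull of a $G$-orbit of a $0$-cube, then iteratively refining via the restriction quotient (Lemma~\ref{lem:hemi1}) until every hyperplane stabilizer is codimension-1 in $G$. Finite generation of $G$ forces a uniform bound on the number of hyperplane orbits in any refinement, and Theorem~\ref{thm:fixedCube} applied contrapositively to an auxiliary single-orbit dual cube complex is what drives each refinement.

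First I would fix a $0$-cube $v_0 \in X$ and a finite generating set $S$ of $G$, and set $X_o^{(0)} := \operatorname{Hull}(Gv_0)$, which is a $G$-invariant convex subcomplex. Let $\mathcal{W}_0$ be the finite set of hyperplanes of $X$ separating $v_0$ from some $sv_0$ with $s \in S^{\pm 1}$. A hyperplane in $X_o^{(0)}$ must separate some pair $gv_0, g'v_0 \in Gv_0$; expressing $g^{-1}g'$ as a word in $S^{\pm 1}$ and telescoping through the intermediate $G$-translates, the hyperplane separates some consecutive pair on the path, and translating by an appropriate element of $G$ exhibits it as a $G$-translate of a hyperplane in $\mathcal W_0$. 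Hence $X_o^{(0)}$ has at most $|\mathcal W_0|$ orbits of hyperplanes, and the same bound persists in any $G$-invariant convex subcomplex obtained by subsequent refinement.

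For the refinement step, suppose some hyperplane $\Lambda$ of $X_o^{(i)}$ has $H := \stab_G(\Lambda)$ not codimension-1 in $G$. I would consider the auxiliary CAT(0) cube complex $Y := C\bigl((X_o^{(i)})^0,\, G\cdot\Lambda\bigr)$. Every hyperplane of $Y$ is a $G$-translate of $\Lambda$, so has stabilizer conjugate to $H$, and therefore no hyperplane stabilizer in $Y$ is codimension-1; since $G$ acts on $Y$ without inversions (inherited from $X$), the contrapositive of Theorem~\ref{thm:fixedCube} forces a $G$-fixed $0$-cube $\hat c \in Y$. Applying Lemma~\ref{lem:hemi1} to the $G$-invariant subset $\mathcal V := G\cdot\Lambda \subseteq \mathcal{W}_{X_o^{(i)}}$, the preimage $\phi^{-1}(\hat c)$ is a nonempty $G$-invariant convex subcomplex of $X_o^{(i)}$. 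By $G$-equivariance of $\hat c$, this preimage lies entirely on one side of every $\Lambda' \in G\cdot\Lambda$, so the orbit $G\cdot\Lambda$ does not cross $\phi^{-1}(\hat c)$, strictly reducing the hyperplane orbit count. Setting $X_o^{(i+1)} := \phi^{-1}(\hat c)$ and iterating, the process terminates (since the orbit count is a bounded nonnegative integer) at some $X_o$ in which every hyperplane stabilizer is codimension-1, and minimality follows because no further refinement is possible.

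The hard part is justifying the refinement step, specifically the contrapositive application of Theorem~\ref{thm:fixedCube} to $Y$: one must verify that the failure of codimension-1 for $H$ really delivers a $G$-fixed $0$-cube $\hat c$ on $Y$ satisfying both the pairwise-intersection axiom and the finite-disparity axiom. Translating the codimension-1 definition from Appendix~A into these $0$-cube axioms is the heart of the argument and is where the bulk of the work lies.
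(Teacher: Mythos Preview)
Your approach is essentially the same as the paper's: start with the convex hull of an orbit, then use the restriction quotient to $C(X_o, G\Lambda)$ together with Theorem~\ref{thm:fixedCube} to strip off any hyperplane orbit whose stabilizer fails to be codimension-1. The paper phrases this as ``take $X_o$ minimal in number of hyperplane orbits and derive a contradiction,'' while you phrase it as an explicit descending iteration; these are equivalent.

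Two small remarks. First, your final paragraph misplaces the difficulty: Theorem~\ref{thm:fixedCube} is applied to $Y$ as a black box---its contrapositive directly hands you the $G$-fixed $0$-cube $\hat c$, and there is nothing further to verify about the $0$-cube axioms (that work lives inside the proof of Theorem~\ref{thm:fixedCube}, already done in Appendix~\ref{AppendixA}). Second, Lemma~\ref{lem:hemi1} as stated only gives a map on $0$-skeleta with nonempty fibres, not that $\phi^{-1}(\hat c)$ is a convex subcomplex; the paper handles this by observing $\phi^{-1}(\hat c)\subseteq \overleftarrow{\Lambda}$ and then passing to the convex $G$-invariant intersection $\bigcap_{g\in G} gL(\Lambda)$, which you could do as well (or simply take the convex hull of $\phi^{-1}(\hat c)$, which still avoids every hyperplane in $G\Lambda$ since halfspaces are convex).
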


\begin{proof}
 Since $G$ is finitely generated, by taking the convex hull of a $G$-orbit we obtain a $G$-invariant convex subcomplex $X_o \subseteq X$ containing finitely many $G$-orbits of hyperplanes.
 Assume that $X_o$ is a minimal such subcomplex in terms of the number of hyperplane orbits.

 Let $(X_o, \mathcal{W})$ be the wallspace obtained from the hyperplanes in $X_o$.
 Suppose that $\stab_G(\Lambda)$ is not a codimension-1 subgroup of $G$ for some $\Lambda \in \mathcal{W}$.
 Let $G \Lambda \subseteq \mathcal{W}$ be the $G$-orbit of $\Lambda$.
 By Lemma~\ref{lem:hemi1} there is an $G$-invariant map $\phi: X_o^{0} \rightarrow C(X_o, G\Lambda)^{0}$.
 Since $\stab_G(\Lambda)$ is not commensurable to a codimension-1 subgroup, Theorem~\ref{thm:fixedCube} implies that there is a fixed $0$-cube $x$ in $C(X_o, G\Lambda)$.
  Lemma~\ref{lem:hemi1} then implies that $\phi^{-1}(x)$ is non-empty.
 Assuming that $\phi^{-1}(x) \subseteq \ola{\Lambda}$, then the intersection $\bigcap_{g\in G}g L(\Lambda)$ contains a proper, convex, $G$-invariant subcomplex of $X_o$, with one less hyperplane orbit.
 This contradicts the minimality of $X_o$.
\end{proof}

 The following Corollary follows since all codimension-1 subgroups of a rank $n$ virtually abelian group are of rank $(n-1)$.

 \begin{cor} \label{cor:abelianMinSubcomplex}
 Let $G$ be a rank $n$, virtually abelian group acting without fixed point or inversions on a CAT(0) cube complex $X$.
 Then there exists a minimal, $G$-invariant, convex subcomplex  $X_o \subseteq X$ such that $X_o$ contains only finitely many hyperplane orbits, and every hyperplane stabilizer is a rank $(n-1)$ subgroup of $G$.
 \end{cor}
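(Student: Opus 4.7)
The plan is to apply Lemma~\ref{lem:minSubcomplex} directly to obtain the minimal, $G$-invariant, convex subcomplex $X_o \subseteq X$ containing only finitely many hyperplane orbits, with every hyperplane stabilizer being a codimension-1 subgroup of $G$. What remains is to identify the rank of an arbitrary codimension-1 subgroup of a rank $n$ virtually abelian group, and show that it must be $n-1$.

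To analyze codimension-1 subgroups of a virtually abelian group $G$, I would pass to a finite-index torsion-free abelian subgroup $A \cong \integers^n$ of $G$. Any subgroup $H \leq G$ is commensurable with $H \cap A \leq \integers^n$, which has some rank $k$. Since the property of being codimension-1 is a commensurability invariant (it is detected by the number of ends of the relative Schreier coset space, or equivalently by the structure of the action on a tree dual to $H$), it suffices to determine which ranks $k$ give a codimension-1 subgroup of $\integers^n$.

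The relative Schreier graph $\integers^n / (H\cap A)$ with respect to a finite generating set is quasi-isometric to $\reals^{n-k}$. Consequently the number of relative ends is $0$ when $k = n$, is $2$ when $k = n-1$, and is $1$ when $k < n-1$. Codimension-1 then corresponds exactly to $k = n-1$. Combining this with Lemma~\ref{lem:minSubcomplex} establishes the corollary.

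The main obstacle is a bookkeeping one: ensuring the characterization of codimension-1 agrees with the definition given in Appendix~\ref{AppendixA}, and confirming the commensurability invariance when passing between $G$ and its finite-index abelian subgroup $A$. Once these technical points are verified, the corollary is essentially immediate from the minimality argument of Lemma~\ref{lem:minSubcomplex} together with the elementary classification of relative ends in $\integers^n$.
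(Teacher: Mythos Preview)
Your proposal is correct and takes essentially the same approach as the paper: the paper derives the corollary in one sentence from Lemma~\ref{lem:minSubcomplex} together with the assertion that all codimension-1 subgroups of a rank~$n$ virtually abelian group are of rank~$(n-1)$. You have simply supplied the relative-ends computation justifying that assertion, which the paper leaves implicit.
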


\section{Proof of Main Theorem}

\begin{defn}
Regard $\reals$ as a CAT(0) cube complex whose $0$-skeleton is $\integers$.
Let $g$ be an isometry of $X$.
A \emph{geodesic combinatorial axis} for $g$ is a $g$-invariant subcomplex homeomorphic to $\reals$ that embeds isometrically in $X$.
\end{defn}
\begin{defn}
Let $(M, d)$ be a metric space.
The subspaces $N_1, N_2 \subseteq M$ are \emph{coarsely equivalent} if each lies in an $r$-neighbourhood of the other for some $r>0$.
\end{defn}

\begin{thm} \label{thm:main}
Let $G$ be virtually $\mathbb{Z}^n$.
Suppose $G$ acts properly and without inversions on a CAT(0) cube complex $X$.
Then $G$ stabilizes a finite dimensional subcomplex $Y \subseteq X$ that is isometrically embedded in the combinatorial metric, and $Y \cong \prod_{i=1}^m C_i$, where each $C_i$ is a cubical quasiline and $m \geq n$.
Moreover, $\stab_G(\Lambda)$ is a codimension-1 subgroup for each hyperplane $\Lambda$ in $Y$.
\end{thm}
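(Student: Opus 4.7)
Assume $n \geq 1$ (the case $n = 0$ reduces to $G$ finite, which stabilizes a $0$-cube $Y$). Properness then forbids a global fixed point. Apply Corollary~\ref{cor:abelianMinSubcomplex} to obtain a $G$-invariant convex subcomplex $X_0 \subseteq X$ whose hyperplanes lie in finitely many $G$-orbits and whose hyperplane stabilizers are all of rank $n-1$. Let $\mathcal{W}$ denote the wallspace of $X_0$. Partition $\mathcal{W}$ by commensurability of stabilizers: $\Lambda \sim \Lambda'$ iff $\stab_G(\Lambda) \cap \stab_G(\Lambda')$ has rank $n-1$. Since conjugate rank-$(n-1)$ subgroups of the virtually abelian group $G$ are commensurable, each equivalence class is $G$-invariant, and there are finitely many classes $\mathcal{W}_1,\ldots,\mathcal{W}_m$ because there are finitely many orbits. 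Fix a representative stabilizer $H_i$ for each class.

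\textbf{Crossing lemma (main obstacle).} The central geometric claim is: for $i \neq j$, every $\Lambda \in \mathcal{W}_i$ crosses every $\Lambda' \in \mathcal{W}_j$ in $X_0$. My strategy is by contradiction: assume $\Lambda' \subseteq L(\Lambda)$. Non-commensurability of $H_i, H_j$ implies $H_i H_j$ has finite index in $G$, so one can pick $h \in H_j \setminus H_i$. Then $h$ preserves $\Lambda'$ together with its halfspaces (no inversions), while sending $\Lambda$ to a distinct parallel translate $h\Lambda \neq \Lambda$; iterating produces a bi-infinite chain $\{h^k \Lambda\}_{k \in \mathbb{Z}}$ of parallel $H_i$-stabilized walls. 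Haglund's combinatorial axis theorem (revisited in Section~\ref{sec:HaglundRevisited}) applied to $h$ yields an axis $\gamma \subseteq X_0$ crossing each $h^k \Lambda$ at uniform positive spacing $\tau^c(h)$, whereas the trap $h^k \Lambda' = \Lambda' \subseteq L(\Lambda)$ forces $\Lambda'$ to lie on the same side of every wall in the chain; a careful distance comparison between the $h$-equivariant position of $\Lambda'$ relative to $\Lambda$ versus to $h\Lambda$ then yields a contradiction. I expect this geometric step to be the principal technical obstacle, since it must rule out a rather subtle parallel configuration.

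\textbf{Assembly and conclusion.} Given the crossing lemma, walls in distinct classes pairwise cross, so the standard dual cube complex factorization (via Lemmas~\ref{lem:hemi0} and~\ref{lem:hemi1}) decomposes
\[
X_0 \;=\; C(X_0, \mathcal{W}) \;\cong\; \prod_{i=1}^{m} C_i, \qquad C_i := C(X_0, \mathcal{W}_i).
\]
Each $C_i$ inherits a $G$-action with kernel $K_i = \bigcap_{\Lambda \in \mathcal{W}_i} \stab_G(\Lambda)$; since $K_i$ is a virtually normal core of commensurable rank-$(n-1)$ subgroups of the virtually abelian $G$, it is virtually $H_i$, so $G/K_i$ is virtually $\mathbb{Z}$ and acts properly cocompactly on $C_i$, which is therefore quasi-isometric to $\mathbb{R}$. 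The lower bound $m \geq n$ follows from the finite-kernel embedding $G \hookrightarrow \prod_i \Aut(C_i)$ whose image is virtually $\mathbb{Z}^m$, forcing $n = \rank G \leq m$. Taking $Y := X_0$ produces the required subcomplex: $G$-invariant, finite-dimensional, isomorphic to $\prod_{i=1}^m C_i$, and isometrically embedded in the combinatorial metric. The codimension-$1$ statement on hyperplane stabilizers is supplied by Corollary~\ref{cor:abelianMinSubcomplex}.
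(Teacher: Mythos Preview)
Your proposal has a genuine gap, and the idea needed to close it is precisely the device the paper uses but you omit.

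You set $Y = X_0$ and try to show that the minimal convex subcomplex itself splits as a product of quasilines. The paper remarks immediately after the statement of Theorem~\ref{thm:main} that $Y$ will not in general be a convex subcomplex, so this cannot work. What the paper does instead is map the Cayley graph $\Upsilon$ of $G$ into $X_o$, set $Q=\phi(\Upsilon)$, and take $Y=C(Q,\mathcal{W}_Q)$, the dual of the wallspace \emph{restricted to $Q$}. Because $Q$ is $G$-equivariantly quasi-isometric to $\mathbb{R}^n$, the trace of each wall on $Q$ is coarsely a codimension-$1$ affine subspace; non-commensurable stabilizers give non-parallel subspaces (hence crossing), while walls in the same commensurability class give \emph{parallel} subspaces, so only boundedly many can pairwise cross in $Q$. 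That bound is exactly what forces each factor $C(Q,\mathcal{W}_i)$ to be a finite-dimensional quasiline.

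Your argument breaks at the corresponding step. Even granting your crossing lemma for walls in different classes, you then assert that $G/K_i$ acts ``properly cocompactly'' on $C_i=C(X_0,\mathcal{W}_i)$; cocompactness is never justified and is false in general. Walls in the same class $\mathcal{W}_i$ have commensurable stabilizers, but in $X_0$ there is no bound on how many of them can pairwise cross: the hyperplanes themselves are not coarsely controlled by their stabilizers, since $\stab_G(\Lambda)$ need not act cocompactly on $\Lambda$. Thus $C(X_0,\mathcal{W}_i)$ can be infinite-dimensional and is certainly not a quasiline without further input. Restricting to $Q$ is what supplies that input.

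Your sketch of the crossing lemma is also incomplete: you claim the translates $\{h^k\Lambda\}$ form a ``chain of parallel walls'', but belonging to the same commensurability class does not imply disjointness, and the promised ``careful distance comparison'' is left as a black box. Finally, invoking Haglund's axis theorem here cuts against the logic of the paper, which derives that theorem in Section~\ref{sec:HaglundRevisited} as a consequence of Theorem~\ref{thm:main}.
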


\begin{proof}
By Corollary~\ref{cor:abelianMinSubcomplex} there is a minimal, non-empty, convex subcomplex $X_o \subseteq X$ stabilized by $G$, containing finitely many hyperplane orbits, and $\stabilizer_G(\Lambda)$ is a rank $(n-1)$ subgroup of $G$, for each hyperplane $\Lambda \subseteq X_o$.

Let $S = \{ g_1 ,\ldots, g_r \}$ be a generating set for $G$.
%, and let $\Upsilon$ be the Cayley graph of $G$ with respect to $S$.
Let $x \in X_o$ be a $0$-cube.
Let $\Upsilon$ be the Cayley graph of $G$ with respect to $S$.
Let $\phi : \Upsilon \rightarrow X_o$ be a $G$-equivariant map that sends vertices to vertices, and edges to combinatorial paths or vertices in $X_o$.
%
%Construct a $G$-equivariant combinatorial map $\phi$ from a subdivided $\Upsilon$ to $X_o$ by sending $g \in \Upsilon$ to $gx$, then choosing a geodesic in $X_o$ from $x$ to $g_ix$ and subdividing the edge $(1, g_i)$ in $\Upsilon$, and then extending $\phi$ equivariantly.
Let $Q = \phi(\Upsilon)$.
As  $G$ acts properly on $X$, and cocompactly on $\Upsilon$, the graph $Q $ is quasiisometric to $G$.
Let $\mathcal{W}_Q$ be the set of hyperplanes intersecting $Q$, and let $(Q, \mathcal{W}_Q)$ be the associated wallspace.
By Lemma~\ref{lem:hemi0} we know that $C(Q, \mathcal{W}_Q)$ is an isometrically embedded subcomplex of $X_o$. %HERE
%There are finitely many $\stabilizer_G(\Lambda)$-orbits of each $\gamma_i$, so $\Lambda \cap Q$ is a $\stabilizer_G(\Lambda)$-cocompact set.
Fix a proper action of $G$ on $\mathbb{R}^n$, and let $q: Q \rightarrow \mathbb{R}^n$ be a $G$-equivariant quasiisometry.
Note that $\stabilizer_G(\Lambda)$ is a quasiisometrically embedded subgroup of $G$, for all $\Lambda \in \mathcal{W}_Q$.
  Thus $q(\Lambda \cap Q)$ is coarsely equivalent to a codimension-1 affine subspace $H \subseteq \mathbb{R}^n$.
Moreover, $q(\ola{\Lambda} \cap Q)$ and $q(\ora{\Lambda} \cap Q)$ are coarsely equivalent to the halfspaces of $H$.

%If $n=0$, then $\mathcal{W}_Q$ is a finite set, and $C(Q, \mathcal{W}_Q)$ is a compact subcomplex.
%As $G$ is finite, it fixes a point $y$ inside $C(Q, \mathcal{W}_Q)$.
%Let $c$ be the cube containing $y$ in its interior. As $G$ acts cellularly it must stabilize $c$.
%As $G$ does not invert hyperplanes, $G$ must fix a $0$-cube in $c$.

Let $n>0$.
Since there are finitely many orbits of hyperplanes in $X_o$, there are only finitely many commensurability classes of stabilizers.
Therefore, we may partition $\mathcal{W}_Q$ as the disjoint union $ \bigsqcup_{i=1}^m \mathcal{W}_i$ where each $\mathcal{W}_i$ contains all walls with commensurable stabilizers.
For each $\Lambda_i \in \mathcal{W}_i$ let $q(\Lambda_i \cap Q)$ be coarsely equivalent to a codimension-1 affine subspace $H_i \subseteq \mathbb{R}^n$, stabilized by $\stabilizer_G(\Lambda_i)$.
If $i\neq j$ then $H_i$ and $H_j$ are nonparallel affine subspaces, and therefore $\Lambda_i$ and $\Lambda_j$ will intersect in $Q$.
Therefore, every wall in $\mathcal{W}_i$ intersects every wall in $\mathcal{W}_j$ if $i \neq j$, and thus $C(Q, \mathcal{W}_Q) \cong \prod_{i=1}^m C(Q, \mathcal{W}_i)$.

Finally, we show that $C(Q, \mathcal{W}_i)$ is a quasiline for each $1 \leq i \leq m$.
As $G$ permutes the factors in $\prod_{i=1}^m C(Q, \mathcal{W}_i)$, there is a finite index subgroup $G' \leqslant G$ that preserves each factor.
For each $i$, the stabilizers $\stab_G(\Lambda)$ are commensurable for all $\Lambda \in \mathcal{W}_i$.
 Therefore, there is a cyclic subgroup $Z_i$ that is not virtually contained in any $\stab_G(\Lambda)$ and thus acts freely on $C(Q, \mathcal{W}_i)$.
As the stabilizers of $\Lambda \in \mathcal{W}_i$ are commensurable, all $q(\Lambda \cap Q)$ will be quasi-equivalent to parallel codimension-1 affine subspaces of $\reals^n$, which implies that only finitely many $Z_i$-translates of $\Lambda$ can pairwise intersect.
As there are finitely many $Z_i$-orbits of $\Lambda$ in $\mathcal{W}_i$, there is an upper bound on the number of pairwise intersecting hyperplanes in $\mathcal{W}_i$.
Thus, there are finitely many $Z_i$-orbits of maximal cubes in $C(Q, \mathcal{W}_i)$, which implies that $C(Q, \mathcal{W}_i)$ is CAT(0) cube complex quasiisometric to $\mathbb{R}$.
%Every hyperplane in $C(Q, \mathcal{W}_i)$, is stabilized by a codimension-1 subgroup of $G'$, thus every hyperplane separates $C(Q, \mathcal{W}_i)$ into two infinite diameter subspaces.
%Therefore $C(X, \mathcal{W}_i)$ is a pseudoline.
\end{proof}

We can now prove Corollary~\ref{cor:geometricFlat}.

\begin{cor} \label{cor:geometricFlat}
Let $G$ be virtually $\integers^n$.
Suppose $G$ acts properly and without inversions on a CAT(0) cube complex $X$.
Then $G$ cocompactly stabilizes a subspace  $F \subseteq X$ homeomorphic to $\mathbb{R}^n$ such that for each hyperplane $\Lambda \subseteq X$, the intersection $\Lambda \cap F$ is either empty or homeomorphic to $\mathbb{R}^{n-1}$.
\end{cor}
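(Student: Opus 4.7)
The plan is to apply Theorem~\ref{thm:main} to obtain $Y \cong \prod_{i=1}^m C_i \subseteq X$, and then carve $F$ out of the product of combinatorial axes in the quasiline factors, exploiting that the virtually $\integers^n$ group $G$ populates only $n$ of the $m$ coordinate directions of $\prod C_i$.

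I would first pass to a normal finite-index subgroup $A \cong \integers^n$ of $G$ that preserves each factor $C_i$. In each cocompact cubical quasiline $C_i$, Haglund's combinatorial axis theorem furnishes a combinatorial geodesic axis $\gamma_i$ for the hyperbolic cyclic subgroup $Z_i$ used in the proof of Theorem~\ref{thm:main}. Since $\Aut(C_i)$ of a cocompact cubical quasiline is virtually $\integers$, after a further finite-index refinement $A$ acts on each $C_i$ by isometries that preserve $\gamma_i$ and translate it by an integer amount $\tau_i(a)$, yielding a combined translation-length homomorphism $\tau = (\tau_1,\ldots,\tau_m) \colon A \to \integers^m$. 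The product $\Gamma = \prod \gamma_i \subseteq Y$ is then a combinatorially isometrically embedded subcomplex homeomorphic to $\reals^m$, on which $A$ acts by the translation vector $\tau$.

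Properness of the $G$-action on $Y$ together with local finiteness of each $C_i$ would force $\tau$ to have trivial kernel, so that $\tau(A)$ is a rank-$n$ lattice in $\integers^m$ spanning an $n$-dimensional subspace $V \subseteq \reals^m$. Furthermore $V$ has nonzero projection onto each coordinate, because $A \cap Z_{i_0}$ is an infinite cyclic subgroup on which $\tau_{i_0}$ is nontrivial. I then take $F \subseteq \Gamma \cong \reals^m$ to be an affine $n$-plane parallel to $V$, with basepoint chosen so that $F$ is fixed setwise by the finite coordinate-permutation action of $G/A$ on $\reals^m$; this is possible because $G/A$ preserves $V$ (which is canonically defined from the normal subgroup $A$), and a $G/A$-fixed basepoint modulo $V$ is produced by averaging over the finite $G/A$-orbit. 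Then $A$ acts cocompactly on $F \cong \reals^n$ by the lattice $\tau(A)$, hence so does $G$. The hyperplane condition follows by cases: a hyperplane $\Lambda \subseteq X$ missing $Y$ gives $\Lambda \cap F = \emptyset$; a hyperplane meeting $Y$ restricts to one of the form $\prod_{j \neq i_0} C_j \times \Lambda_{i_0}$ of $Y$, whose intersection with $\Gamma$ is either empty (when $\Lambda_{i_0} \cap \gamma_{i_0} = \emptyset$) or the coordinate hyperplane $\{x_{i_0} = c\}$ of $\Gamma \cong \reals^m$, which meets $F$ transversely in a copy of $\reals^{n-1}$.

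The main obstacle is the equivariant setup: refining $A$ so that its image in each $\Aut(C_i)$ lies in the cyclic translation subgroup of $\gamma_i$, simultaneously arranging the axes $\gamma_i$ to be compatibly permuted by the finite quotient $G/A$ (which permutes the factors $C_i$), and then placing the basepoint of $F$ so that $G$ acts on $F$ as an affine group rather than only up to a cocycle. Both issues are resolved by finitely many finite-index refinements of $A$ together with averaging over the finite $G/A$-orbit; once this is in hand, the transversality computation for hyperplane intersections and the cocompactness of the $G$-action on $F$ are immediate.
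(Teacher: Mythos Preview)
Your approach differs substantially from the paper's. The paper simply observes that $Y = \prod_{i=1}^m C_i$ is itself a finite-dimensional, locally finite, complete CAT(0) space on which $G$ acts properly by semisimple isometries (semisimplicity being automatic for cellular isometries of such complexes), and then invokes the classical Flat Torus Theorem on $Y$ to produce the $G$-invariant flat $F \subseteq Y$ in a single step, with no equivariance bookkeeping. The hyperplane claim follows because a hyperplane $\Lambda \cap Y$ of $Y$ is totally geodesic in the CAT(0) metric of $Y$, so its intersection with the isometrically embedded flat $F$ is an affine subflat, and the codimension-one stabilizer forces this subflat to have dimension $n-1$.

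Your construction via Haglund axes $\gamma_i$ in each factor, $\Gamma = \prod_i \gamma_i \cong \reals^m$, and an affine $n$-plane $F \subseteq \Gamma$ is more explicit and sidesteps the Flat Torus Theorem as a black box, which has its own appeal. But one of your equivariance steps is genuinely incomplete. Averaging over $G/A$ correctly locates the \emph{basepoint} of $F$ once $\Gamma$ is already $G$-invariant; it does nothing toward making $\Gamma$ itself $G$-invariant. For that you need each $\gamma_i$ to be preserved by the full setwise stabilizer $\stab_G(C_i)$, not merely by the image of $A$, and the $\gamma_i$ lying in a single $G$-orbit of factors must be $G$-translates of one another. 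Combinatorial axes in a quasiline are not unique and cannot be averaged, so ``finite-index refinements plus averaging'' does not settle this point. A clean fix is to replace the combinatorial axes by CAT(0) min-sets in each factor, which are canonical and hence automatically equivariant---but that is the one-dimensional Flat Torus Theorem applied factorwise, and at that point the paper's direct global application to $Y$ is shorter. (A minor further remark: within this paper Haglund's axis theorem is re-derived \emph{from} the present corollary, so invoking it here is locally circular; Haglund's original proof is of course independent.)
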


\begin{proof}
By Theorem~\ref{thm:main} there is a $G$-equivariant, isometrically embedded, subcomplex $Y \subseteq X$, such that $Y = \prod_{i=1}^m C_i$, where each $C_i$ is a quasiline, and $\stab_G(\Lambda)$ is a codimension-1 subgroup.
Considering $Y$ with the CAT(0) metric, note that $Y$ is a complete CAT(0) metric space in its own right, and $G$ acts semisimply on $Y$.
By the Flat Torus Theorem~\cite{BridsonHaefliger} there is an isometrically embedded flat $F\subseteq Y$.
Note that $F \subseteq X$ is not isometrically embedded.
As $\stab_G(\Lambda)$ is a codimension-1 subgroup of $G$ for each hyperplane $\Lambda$ in $X$, the intersection $\Lambda \cap F = (\Lambda \cap Y) \cap F$ is either empty or, as $F \subseteq Y$ is isometrically embedded, the hyperplane intersection is an isometrically embedded copy of $\reals^{n-1}$.
\end{proof}

\section{Haglund's Axis} \label{sec:HaglundRevisited}

The goal of this section is to reprove the following result of Haglund as a consequence of Corollary~\ref{cor:geometricFlat}.
\begin{thm}[Haglund~\cite{HaglundSemiSimple}] \label{thm:HaglundAxis}
Let $G$ be a group acting on a CAT(0) cube complex without inversions.
Every element $g \in G$ either fixes a $0$-cube of $G$, or stabilizes a combinatorial geodesic axis.
\end{thm}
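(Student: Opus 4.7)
The plan is to apply Corollary~\ref{cor:geometricFlat} to the cyclic subgroup $\langle g\rangle\leq G$, after first reducing to the case where $\langle g\rangle$ acts freely on $X$. If $g$ has finite order then every $\langle g\rangle$-orbit in $X$ is bounded, so by the CAT(0) fixed-point theorem $\langle g\rangle$ fixes a point $x\in X$; the minimal cube $\sigma$ containing $x$ is $g$-invariant, and because the action is without inversions $g|_\sigma$ must be trivial, giving $g$ a fixed $0$-cube. Now suppose $g$ has infinite order and fixes no $0$-cube; I claim $\langle g\rangle\cong\mathbb{Z}$ acts freely on $X$. Otherwise some $g^k$ with $k\neq 0$ fixes a $0$-cube, and the convex subcomplex $\Omega\subseteq X$ of $g^k$-fixed $0$-cubes is nonempty. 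Since $g$ commutes with $g^k$ it preserves $\Omega$ and acts there through the finite quotient $\langle g\rangle/\langle g^k\rangle$; the previous (finite-order) argument applied to this restricted, still inversion-free action produces a $g$-fixed $0$-cube in $\Omega$, contradicting our hypothesis.

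With $\langle g\rangle$ acting properly on $X$, I apply Theorem~\ref{thm:main} and Corollary~\ref{cor:geometricFlat} with $n=1$. Since every codimension-$1$ subgroup of $\mathbb{Z}$ is trivial, the partition $\mathcal{W}_Q=\bigsqcup_{i=1}^m\mathcal{W}_i$ collapses to a single commensurability class, so $m=1$ and the resulting $\langle g\rangle$-invariant subcomplex $Y\subseteq X$ is itself a single cubical quasiline, isometrically embedded in the combinatorial metric. Corollary~\ref{cor:geometricFlat} then supplies a $g$-invariant subspace $F\subseteq Y$ homeomorphic to $\mathbb{R}$ such that every hyperplane of $X$ meets $F$ in either $\emptyset$ or a single point.

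To promote $F$ to a combinatorial subcomplex I enumerate the hyperplanes of $Y$ meeting $F$ as $\ldots,\Lambda_{-1},\Lambda_0,\Lambda_1,\ldots$ in the order of their intersection points along $F$; then $g$ acts on the indices by a shift of some amplitude $\tau\geq 1$, and triviality of hyperplane stabilizers forces every hyperplane of $Y$ to appear on this list (any missing hyperplane would contain $F$ in one of its halfspaces, yet its $g$-translates are cofinal in both directions of $F$, a contradiction). Choose a $0$-cube $v\in Y$ in the region between $\Lambda_0$ and $\Lambda_1$, so that the hyperplanes separating $v$ from $gv$ are exactly $\Lambda_1,\ldots,\Lambda_\tau$; take any combinatorial geodesic $\gamma_0$ from $v$ to $gv$ in $Y$ and set $\gamma=\bigcup_{n\in\mathbb{Z}}g^n\gamma_0$. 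By construction the translates $g^n\gamma_0$ cross pairwise disjoint sets of hyperplanes that together exhaust $\{\Lambda_k\}$, so $\gamma\cong\mathbb{R}$ is a combinatorial geodesic in $Y$, and because $Y\subseteq X$ is combinatorially isometrically embedded, $\gamma$ is also a combinatorial geodesic axis for $g$ in $X$.

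The main obstacle is this final conversion: in a higher-dimensional quasiline consecutive hyperplanes along $F$ can pairwise intersect, so both the existence of a $0$-cube $v$ in the region between $\Lambda_0$ and $\Lambda_1$ and the disjointness of the hyperplane sets crossed by different translates $g^n\gamma_0$ require the quantitative quasiline structure --- specifically the uniform bound on pairwise intersecting hyperplanes within a single commensurability class that is established inside the proof of Theorem~\ref{thm:main}.
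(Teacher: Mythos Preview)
Your overall strategy coincides with the paper's: reduce to $\langle g\rangle$ acting properly, invoke Corollary~\ref{cor:geometricFlat} to obtain a $g$-invariant line $F$ meeting each hyperplane in at most one point, and then upgrade $F$ to a combinatorial axis. Your reduction to a free action is in fact argued more carefully than in the paper (which simply asserts properness), and the observation that $m=1$ is correct, though the paper does not use it.

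The genuine gap is exactly the step you flag at the end. You ask for a $0$-cube $v\in Y$ ``in the region between $\Lambda_0$ and $\Lambda_1$'', but if $\Lambda_0$ and $\Lambda_1$ cross in $Y$ (they may, since $Y$ is a quasiline rather than a line) that region is empty. Even when such a $v$ exists, the halfspaces $\ola{\Lambda_i}$ are nested only along $F$, not in $Y$; so $v\in R(\Lambda_0)\cap L(\Lambda_1)$ does not force $v\in L(\Lambda_2)$, and the hyperplanes separating $v$ from $gv$ need not be precisely $\Lambda_1,\ldots,\Lambda_\tau$. The uniform bound on pairwise-crossing hyperplanes does not repair this: it caps how many consecutive $\Lambda_i$ can cross, but neither manufactures the missing $0$-cube nor restores the nesting you need.

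The paper avoids the problem by taking $v$ to be the \emph{canonical} $0$-cube $c_p$ of a point $p\in F$ chosen between the intersection points $\Lambda_0\cap F$ and $\Lambda_1\cap F$ (and off every hyperplane). Such $c_p$ always exists, and since $gc_p=c_{gp}$, the hyperplanes separating $c_p$ from $gc_p$ are exactly those separating $p$ from $gp$ along $F$, namely $\Lambda_1,\ldots,\Lambda_\tau$. The linear order of intersection points along $F$ then directly specifies a combinatorial path from $c_p$ to $c_{gp}$ crossing each $\Lambda_i$ once, and its $\langle g\rangle$-orbit is the axis. Replacing your choice of $v$ with this canonical $0$-cube closes the gap.
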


\begin{proof}
As finite groups don't contain codimension-1 subgroups, Theorem~\ref{thm:fixedCube} implies that if $g$ is finite order then it fixes a $0$-cube.
Suppose that $G$ does not fix a $0$-cube, then $\langle g \rangle$ must act properly on $X$.
By Corollary~\ref{cor:geometricFlat}, there is a line $L \subset X$ stabilized by $G$, that intersects each hyperplane at most once at a single point in $L$.
Let $\mathcal{W}_L$ be the set of hyperplanes intersecting $L$.
Note that the intersection points of the walls in $\mathcal{W}_L$ with $L$ is locally finite subset.

Fix a basepoint $p \in L$ that doesn't belong to a hyperplane intersecting $L$, and let $x$ be the canonical $0$-cube corresponding to $p$.
Let $\Lambda_1, \ldots, \Lambda_k$ be the set of hyperplanes separating $p$ and $gp$, and assume that $p \in \overleftarrow{\Lambda}_i$.
Reindex the hyperplanes such that $\ola{\Lambda}_1 \cap L \subseteq \ola{\Lambda}_2 \cap L \subseteq \cdots \subseteq \ola{\Lambda}_k \cap L$.
The ordering of the hyperplanes separating $p$ and $gp$ determines a combinatorial geodesic joining $x$ and $gx$ of length $k$, where the $i$-th edge is a $1$-cube dual to $\Lambda_i$.
This can be extended $\langle g \rangle$-equivariantly, to obtain a combinatorial geodesic axis $L_c$, since each hyperplanes intersects $L_c$ at most once.
\end{proof}

\appendix

\section{Codimension-1 Subgroups} \label{AppendixA}

\begin{defn} \label{defn:codim1}
Let $G$ be a finitely generated group.
Let $\Upsilon$ denote the Cayley graph of $G$ with respect to some finite generating set.
A subgroup $H \leqslant G$ is \emph{codimension-1} if $K \slash \Upsilon$ has more than one end.

Let $\oplus$ denote the operation of symmetric difference.
A subset $A \subseteq G$ is \emph{$H$-finite} if $A \subseteq HF$ where $F$ is some finite subset of $G$.
We will use the following equivalent formulation (see~\cite{Scott77}) of codimension-1:
A subgroup $H \leq G$ is a codimension-1 subgroup if there exists some $A \subseteq G$ such that 
\begin{enumerate}
 \item \label{item:codim1:1} $A = HA$,
 \item \label{item:codim1:2} $A$ is \emph{$H$-almost invariant}, that is to say that $A \oplus Ag$ is $H$-finite for any $g \in G$.
 \item $A$ is \emph{$H$-proper}, that is to say that neither $A$ nor $G-A$ is $H$-finite.
\end{enumerate}
\end{defn}

We will reprove the following theorem from~\cite{Gerasimov97} using techniques from~\cite{NibloRoller98}.

\begin{thm}
Let $G$ be a finitely generated group acting on a CAT(0) cube complex $X$ without edge inversions or fixing a $0$-cube.
Then the stabilizer of some hyperplane in $X$ is a codimension-1 subgroup of $G$.
\end{thm}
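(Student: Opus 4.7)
The plan is to prove the contrapositive: assuming no hyperplane of $X$ has a codimension-$1$ stabilizer in $G$, we exhibit a $G$-fixed $0$-cube of $X$. Fix $x \in X^{(0)}$ and replace $X$ by the $G$-convex hull $X_o$ of $Gx$. Because $G$ is finitely generated, every hyperplane of $X_o$ is a $G$-translate of one separating $x$ from $sx$ for some generator $s$, so $X_o$ contains only finitely many $G$-orbits of hyperplanes. For each hyperplane $\Lambda$ of $X_o$ set $H_\Lambda = \stab_G(\Lambda)$ and
\[A_\Lambda = \{g \in G : gx \in \ola{\Lambda}\}.\]
The no-inversions hypothesis gives $H_\Lambda A_\Lambda = A_\Lambda$, verifying~\eqref{item:codim1:1}. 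For $g_0 \in G$, an element $g$ lies in $A_\Lambda \oplus A_\Lambda g_0$ iff $g^{-1}\Lambda$ separates $x$ from $g_0^{-1}x$; since only finitely many hyperplanes separate these two $0$-cubes, and $\{g : g^{-1}\Lambda = \Lambda'\}$ is a single $H_\Lambda$-coset for each such $\Lambda'$, this symmetric difference is $H_\Lambda$-finite, giving~\eqref{item:codim1:2}. Since $H_\Lambda$ is not codimension-$1$, one of $A_\Lambda, G - A_\Lambda$ must be $H_\Lambda$-finite, and we define $\sigma(\Lambda)$ to be the halfspace whose associated subset of $G$ is co-$H_\Lambda$-finite.

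A direct translation verifies $\sigma(g\Lambda) = g\sigma(\Lambda)$, so $\sigma$ is $G$-equivariant. For any two hyperplanes $\Lambda_1, \Lambda_2$ of $X_o$ we claim the halfspaces $\sigma(\Lambda_1)$ and $\sigma(\Lambda_2)$ meet: if not, then $A_{\sigma(\Lambda_1)} \cap A_{\sigma(\Lambda_2)} = \emptyset$, so $G = (G - A_{\sigma(\Lambda_1)}) \cup (G - A_{\sigma(\Lambda_2)})$ is a finite union of cosets of $H_{\Lambda_1}$ and $H_{\Lambda_2}$, forcing some $H_{\Lambda_i}$ to have finite index in $G$ by B.H. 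Neumann's lemma. The degenerate case of a finite-index hyperplane stabilizer (which in principle allows both $A_\Lambda$ and $G - A_\Lambda$ to be $H_\Lambda$-finite, leaving $\sigma$ ambiguous) is handled by a separate reduction, e.g.\ by replacing $G$ with a finite-index subgroup that avoids this pathology. Outside of that case, $\sigma$ defines a consistent $G$-equivariant orientation, hence a $G$-fixed $0$-cube $c$ in the dual cube complex $C(X_o, \mathcal{W})$.

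The delicate step, and the main obstacle, is realizing $c$ as an honest $0$-cube of $X_o$ rather than an element of the Roller completion. A $0$-cube of $C(X_o, \mathcal{W})$ lies in $X_o^{(0)}$ precisely when it differs from the canonical $0$-cube $c_x$ on only finitely many hyperplanes. The hyperplanes on which $c$ and $c_x$ disagree are those with $x \notin \sigma(\Lambda)$. For a fixed orbit representative $\Lambda_0$, a hyperplane $g\Lambda_0$ satisfies this condition iff $g \in (G - A_{\sigma(\Lambda_0)})^{-1}$; writing $G - A_{\sigma(\Lambda_0)} \subseteq H_{\Lambda_0} F$ for finite $F$, the inverse set $F^{-1} H_{\Lambda_0}$ is a union of at most $|F|$ left $H_{\Lambda_0}$-cosets, so only finitely many distinct hyperplanes in the orbit give disagreement. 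Summing over the finitely many $G$-orbits of hyperplanes in $X_o$, the total number of disagreement hyperplanes is finite. Thus $c$ corresponds to a $0$-cube $v \in X_o^{(0)} \subseteq X^{(0)}$ fixed by $G$, the desired contradiction.
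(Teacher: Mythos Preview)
Your approach is genuinely different from the paper's, and in the main case it is more elementary. The paper proceeds by first showing that the orbit $Gx_0$ is bounded (from the $H_i$-finiteness of the $A_i$), then isometrically embedding $X^1$ into $\ell^2(\mathcal{H})$, using the circumcentre of a bounded orbit in Hilbert space to produce a $G$-fixed point there, identifying that fixed point with a vertex of the ``connected cube'' $\mathcal{C}(\mathcal{H})$, and finally running a separate combinatorial argument to upgrade this to a fixed $0$-cube of $X$. You bypass all of the Hilbert-space machinery by directly constructing a $G$-equivariant orientation $\sigma$ (the ``big'' halfspace) and verifying that it satisfies the two axioms of a $0$-cube; your counting argument in the last paragraph is correct and simultaneously verifies axiom~\eqref{ax:finiteDisparity} and identifies $c$ with a genuine $0$-cube of $X_o\cong C(X_o,\mathcal{W})$. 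This is a cleaner route when it applies.

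The degenerate case, however, is a real gap rather than a detail. If $H_\Lambda$ has finite index in $G$ then passing to a finite-index subgroup $G'\leqslant G$ does \emph{not} avoid the pathology: $\stab_{G'}(\Lambda)=G'\cap H_\Lambda$ still has finite index in $G'$, so $\sigma$ remains undefined on that orbit. One can repair this, but not in the way you suggest. For instance: the hyperplanes with finite-index stabilizer form a finite $G$-invariant set $\mathcal{F}$, so the restriction quotient $C(X_o,\mathcal{F})$ is a finite CAT(0) cube complex on which $G$ acts without inversions, hence has a $G$-fixed $0$-cube; this supplies a $G$-equivariant choice of halfspace on $\mathcal{F}$. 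But you must then still verify consistency between this choice and the big-side choice on the infinite-index orbits, and your Neumann-lemma argument does not cover that mixed case. By contrast, the paper's bounded-orbit route is insensitive to the finite/infinite index dichotomy, since the bound $\dist^c_X(x_0,fx_0)\leq 2\sum_i|F_i|$ holds uniformly.
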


\begin{proof}
 Suppose that no hyperplane stabilizer is a codimension-1 subgroup of $G$. We will find a $0$-cube fixed by $G$.

 Let $\mathcal{H}$ denote the set of hyperplanes in $X$.
 We can assume that $X$ has finitely many $G$-orbits of hyperplanes after possibly passing to the convex hull of a single $0$-cube orbit in $X$. 
 If $x,y$ are $0$-cubes in $X$, then let $\Delta(x,y) \subseteq \mathcal{H}$ denote the hyperplanes separating $x$ and $y$.
 Note that $$\dist_X^c(x,y) = | \Delta(x,y) |.$$
 
 Let $\Lambda_1, \ldots, \Lambda_n$ be a minimal set of representatives of those orbits.
 Let $$H_i =\stab_G(\Lambda_i) \; \textrm{ and } \; A_i = \{ g \in G \mid gx_0 \in \overleftarrow{\Lambda}_i \}. $$
 We can verify that $A_i$ satisfies the first two criteria in Definition~\ref{defn:codim1}.
 
 (\ref{item:codim1:1}): It is immediate that $A_i = H_iA_i$, as $G$ doesn't invert the hyperplanes in $X$.
 
 (\ref{item:codim1:2}): Let \emph{xor} denote the exclusive or. For $f\in G$ we can deduce that $A_i \oplus A_if$ is $H_i$-finite:
       \begin{align*}
        g \in A_i \oplus A_i f & \iff gx_0 \in \overleftarrow{\Lambda}_i \textrm{ xor } gf^{-1}x_0 \in \overleftarrow{\Lambda}_i \\
                         & \iff x_0 \in g^{-1}\overleftarrow{\Lambda}_i \textrm{ xor } f^{-1}x_0 \in g^{-1}\overleftarrow{\Lambda}_i \\
                         & \iff g\in G \textrm{ such that } g^{-1}\Lambda_i \textrm{ separates $x_0$ and $f^{-1}x_0$. }  
       \end{align*}
       As $(X, \mathcal{H})$ is a wallspace, there are only finitely many $g \in G$ such that $g^{-1}\Lambda_i$ separates $x_0$ and $f^{-1}x_0$. 
       If $g_1 \Lambda_i, \ldots, g_k\Lambda_i$ are the translates then 
       $$A_i + A_if = \{g_1, \ldots, g_k\} H_i$$
       which implies almost $H_i$-invariant.
       
      Therefore,  $A_i$ cannot be $H_i$-proper, for any $i$, as we have assumed that none of the $H_i$ are codimension-1.
    This means that $A_i$ is $H_i$-finite so 
    $A_i \subseteq H_iF_i$ where $F_i \subseteq G$ is finite.
    \begin{claim}
     $\dist_X(x_0, fx_0) <  2 \max_i(|F_i|)$ for all $f \in G$.
    \end{claim}
    \begin{proof}
     \begin{align*}
      g\Lambda_i \in \Delta(x_0, fx_0) & \iff x_0[g\Lambda_i] \neq fx_0[g\Lambda_i] \\
                                       & \iff x_0 \in g \overleftarrow{\Lambda}_i \textrm{ xor } fx_0 \in g \overleftarrow{\Lambda}_i \\
                                       & \iff g^{-1}x_0 \in g \overleftarrow{\Lambda}_i \; \textrm{ xor } \; g^{-1}fx_0 \in g \overleftarrow{\Lambda}_i \\
                                       & \iff g^{-1} \in A_i \; \textrm{ xor } \; g^{-1} \in Af^{-1}\\
                                       & \iff g^{-1} \in A_i + A_if^{-1}
     \end{align*}
     As the final set is covered by $2|F_i|$ translates of $H_i$, we can deduce that there are at most $2|F_i|$ hyperplanes in $\Delta(x_0, fx_0)$.
    \end{proof}
    
    Thus, we can conclude that the $G$-orbit of $x_0$ is a bounded set.
    If $G$ has a finite orbit in $X$, then the convex hull of the orbit is a compact, finite dimensional, complete CAT(0) cube complex, and we can apply  Corollary II.2.8 (1) from~\cite{BridsonHaefliger} to find a fixed point $p$.
    As $p$ is in the interior of some $n$-cube that is fixed by $G$, and since $G$ doesn't invert hyperplanes we can deduce that $G$ fixes a $0$-cube in that cube.
    If the $G$-orbits in $X$ are infinite, then their convex hull may not be complete, so the above argument will not hold.

Let $\mathcal{C}(\mathcal{H})$ denote the \emph{connected cube},  a graph with vertices given by functions $c: \mathcal{H} \rightarrow \{0,1\}$ with finite support, and edges that join a pair of distinct vertices if and only if they differ on precisely one hyperplane.

Fix a $0$-cube $x_0$.
Then there is an embedding $$\phi: X^1 \hookrightarrow \mathcal{C}(\mathcal{H})$$ 
that maps the $0$-cube $x$ to $c_x$ where
\[ c_x(\Lambda) = \begin{cases}
       1 & \textrm{if } \; x[\Lambda] \neq x_0[\Lambda] \\
       0 & \textrm{if } \; x[\Lambda] = x_0[\Lambda] \\
      \end{cases}
\]
\noindent A hyperplan $\Lambda \in \mathcal{H}$ \emph{separates} two vertices $c_1,c_2$ in $\mathcal{C}(\mathcal{H})$ if $c_1(\Lambda) \neq c_2(\Lambda)$.
Note that $\Lambda$ separates $0$-cubes $x,y$ in $X$ if and only if it separates $\phi(x)$ and $\phi(y)$.
Therefore, we can define $\Delta(c_1, c_2)$ for vertices in $\mathcal{C}(\mathcal{H})$ and conclude that if $x,y$ are $0$-cubes in $X$ then $\Delta(x,y) = \Delta( \phi(x), \phi(y) )$.  
This implies that $\phi$ is an isometric embedding in the combinatorial metric.
    
    We will show that a bounded orbit in $X$ implies there is a fixed $0$-cube in $\mathcal{C}(\mathcal{H})$ and then argue that we can go one step further and find a fixed $0$-cube in $X$.
 
 Let $\ell^2(\mathcal{H})$ be the Hilbert Space of square summable functions $s: \mathcal{H} \rightarrow \mathbb{R}$.
    There is an embedding $\rho: \mathcal{C}(\mathcal{H}) \rightarrow \ell^2(\mathcal{H})$ given by
    \[
     \rho(c)(\Lambda) = c[\Lambda]
    \]
    It is straight forward to verify that $\| \rho(c_1) - \rho(c_2) \|^2 = \dist_{\mathcal{C}(\mathcal{H})}(c_1, c_2)$.
    There is a $G$-action on $\ell^2(\mathcal{H})$ such that if $s \in \ell^2(\mathcal{H}), \Lambda \in \mathcal{H}, g \in G$ then 
    \[
     g s(\Lambda) = \begin{cases}
                     s(g^{-1}\Lambda) & \textrm{if $c_{x_0}(g^{-1} \Lambda) = c_{x_0}(\Lambda)$} \\
                     1 - s(g^{-1}\Lambda) & \textrm{if $c_{x_0}(g^{-1}\Lambda) \neq c_{x_0}(\Lambda)$} \\
                    \end{cases}
    \]
    It is again straight forward to verify that this action is by isometries, and that $\rho$ is $G$-equivariant.
    
    As $G{x_0}$ is bounded, so is $G(\rho \circ \phi(x_0))$.
    It then follows that $G$ has a fixed point in $\ell^2(\mathcal{H})$ (\cite{NibloRoller98} gives a proof and also cites Lemma 3.8 in~\cite{HarpeValette89}).
    Let $s: \mathcal{H} \rightarrow \mathbb{R}$ be the fixed point.
    For all $g \in G$ we can deduce that $s(g \Lambda)$ is either $s(\Lambda)$ or $1 - s(\Lambda)$. 
    Therefore $s$ can only take two values on the hyperplanes in a single $G$-orbit.
    As $s$ has to be square summable the two values have to be $0$ and $1$, and $s$ can only take the value $1$ on finitely many hyperplanes.
    Thus, $s$ is the image of a point $c$ in $\mathcal{C}(\mathcal{S})$.
    
    Let $c \in \mathcal{C}(\mathcal{S})$ be a $G$-invariant vetex which minimizes the distance to the image of $X^1$ in $\mathcal{C}(\mathcal{S})$.
    Let $Z$ be a $G$-orbit of $0$-cubes in $X$ such that $\phi(Z)$ realize the minimal distance from $c$.
    
    Let $\mathcal{V}$ be the set of hyperplanes that intersect $\{c \} \cup \mathcal{V}$.
    Every hyperplane in $ \mathcal{V}$ must intersect $Z$ otherwise if $\mathcal{F} \subseteq \mathcal{V}$ is the finite, $G$-invariant subset of hyperplanes separating $c$ from $Z$ we can define a $0$-cube $c'$ such that 
    $$c'(\Lambda) = \begin{cases}
                     c(\Lambda) & \textrm{ if $\Lambda \notin \mathcal{F}$} \\
                     1 - c(\Lambda') & \textrm{ if $\Lambda \in \mathcal{F}$} \\
                    \end{cases} $$
    and deduce that $c'$ is $G$-invariant and is $|\mathcal{F}|$ closer to $Z$ than $c$.
    
    Let $z_0, z_1, z_2, \ldots$ an enumeration of $0$-cubes in $Z$.
    Each hyperplane separating $z_0$ and $z_1$ must lie in either $\Delta(z_0, c)$ or $\Delta( z_1, c)$.
    As $z_0$ is minimal distance in $X$ from $c$, the edges in $X$ incident to $z_0$ must be dual to hyperplanes not in $\Delta(z_0, c)$, and instead belongs to $\Delta(z_1,c)$.
    Therefore, the hyperplane $\Lambda_0 \in \mathcal{V}$ dual to the first edge in a combinatorial geodesic joining $z_0$ to $z_1$ must lie in $\Delta(z_1,c)$.
    Similarly, there exists a hyperplane $\Lambda_1$ dual to the first edge of the combinatorial geodesic in $X$ joining $z_1$ to $z_2$ that belongs to $\Delta(z_2,c)$ but not $\Delta(z_1,c)$.
    Note that $\Lambda_1$ cannot intersect $\Lambda_0$ in $X$, otherwise $\Lambda_0$ would be dual to an edge incident to $z_1$, which would imply that there exists a $0$-cube in $X$ adjacent to $z_1$ that is closer to $c$.
    Therefore $\Lambda_0, \Lambda_1$ separates $z_0$ from $z_2$ in $X$.
    Iterating this argument produces a sequence of disjoint hyperplanes $\Lambda_0, \Lambda_2, \Lambda_3, \ldots$  such that $\Lambda_0, \ldots, \Lambda_k$ separates $z_0$ from $z_{k+1}$ in $X$.
    This contradicts the hypothesis that $Z$ is a bounded set in $X$.
\end{proof}

\bibliographystyle{plain}
\bibliography{Ref}

\end{document}